%% LyX 2.1.0 created this file.  For more info, see http://www.lyx.org/.
%% Do not edit unless you really know what you are doing.
\documentclass[british]{article}
\usepackage[T1]{fontenc}
\usepackage[latin9]{inputenc}
\usepackage{geometry}
\geometry{verbose,tmargin=3.5cm,bmargin=3.5cm,lmargin=3.5cm,rmargin=3.5cm}
\usepackage{amsthm}
\usepackage{amsmath}
\usepackage{amssymb}
\usepackage{graphicx}
\usepackage{esint}

\makeatletter

%%%%%%%%%%%%%%%%%%%%%%%%%%%%%% LyX specific LaTeX commands.
%% Because html converters don't know tabularnewline
\providecommand{\tabularnewline}{\\}

%%%%%%%%%%%%%%%%%%%%%%%%%%%%%% Textclass specific LaTeX commands.
\numberwithin{equation}{section}
\numberwithin{figure}{section}
\newcommand{\lyxaddress}[1]{
\par {\raggedright #1
\vspace{1.4em}
\noindent\par}
}
  \theoremstyle{definition}
  \newtheorem{defn}{\protect\definitionname}
  \theoremstyle{plain}
  \newtheorem{thm}{\protect\theoremname}
 \ifx\proof\undefined\
   \newenvironment{proof}[1][\proofname]{\par
     \normalfont\topsep6\p@\@plus6\p@\relax
     \trivlist
     \itemindent\parindent
     \item[\hskip\labelsep
           \scshape
       #1]\ignorespaces
   }{%
     \endtrivlist\@endpefalse
   }
   \providecommand{\proofname}{Proof}
 \fi
  \theoremstyle{plain}
  \newtheorem{cor}{\protect\corollaryname}

\makeatother

\usepackage{babel}
\providecommand{\corollaryname}{Corollary}
\providecommand{\definitionname}{Definition}
\providecommand{\theoremname}{Theorem}

\begin{document}

\title{Matrix integrals and generating functions for enumerating rooted
hypermaps by vertices, edges and faces for a given number of darts}

\author{Jacob P Dyer}

\maketitle

\lyxaddress{jpd514@york.ac.uk}

\lyxaddress{Department of Mathematics, University of York, York YO10 5DD, UK}
\begin{abstract}
A recursive method is given for finding generating functions which
enumerate rooted hypermaps by number of vertices, edges and faces
for any given number of darts. It makes use of matrix-integral expressions
arising from the study of bipartite quantum systems. Direct evaluation
of these generating functions is then demonstrated through the enumeration
of all rooted hypermaps with up to 13 darts.\end{abstract}
\begin{description}
\item [{Keywords:}] enumeration, rooted hypermap, bipartite quantum system,
matrix integral, generating function, divergent power series
\end{description}

\section{Introduction}

This paper is an extension of work we carried out in a previous paper
\cite{Dyer2014a}. In that paper we showed how the mean value of traces
of integer powers of the reduced density operator of a finite-dimensional
bipartite quantum system are proportional to generating functions
for enumerating one-face rooted hypermaps. We then used this relation
to derive a matrix integral expression for these generating functions,
and found closed form expressions for them.

Matrix integral expressions derived from finding the average of a
function of the reduced density operator have been studied for some
time \cite{Lloyd1988,Page1993,Foong1994,Sanchez-Ruiz1995,Sen1996},
so numerous methods for evaluating them have been described. In particular,
Lloyd and Pagels were able to reduce the matrix integral to an integral
over the space of eigenvalues with the density function \cite{Lloyd1988}
\[
P(p_{1},\ldots,p_{m})dp_{1}\ldots dp_{m}\propto\delta\left(1-\sum_{i=1}^{m}p_{1}\right)\Delta^{2}(p_{1},\ldots,p_{m})\prod_{k=1}^{m}p_{k}^{n-m}dp_{k}
\]
where $\Delta(p_{1},\ldots,p_{m})$ is the Vandermonde determinant
of the eigenvalues of the reduced density operator. Using this, in
conjunction with the work of Sen \cite{Sen1996}, we were able to
evaluate a closed-form expression for the one-face generating functions.

In this paper we extend these methods to derive expressions for generating
functions which enumerate all rooted hypermaps by number of vertices,
edges and faces for a given number of darts (we give a definition
of rooted hypermaps in Section \ref{sec:Representing-rooted-hypermaps}).
These generating functions are defined recursively in terms of another
expression called $F(m,n,\lambda;x)$, which we define in Section
\ref{sec:Additional-generating-functions} and is itself evaluated
using a matrix integral as above.

Previous work already exists on enumeration of hypermaps \cite{Arques1987,Mednykh2010,Walsh2012},
and in particular Walsh managed to enumerate all rooted hypermaps
with up to 12 darts by number of edges, vertices and darts, and genus
\cite{Walsh2012}. But as far as we are aware this is the first time
that generating functions for enumerating all rooted hypermaps by
these properties have been found without direct computation of the
hypermaps themselves. By avoiding having to generate the hypermaps
individually, we are able to vastly speed up the process of enumeration
(there are more than $r!$ hypermaps with $r$ darts \cite{Dyer2014a},
so generating them all is a very slow process).

We will give an overview of our previous work in Section \ref{sub:One-face-hypermaps},
before showing how best to generalise it to multiple faces in Section
\ref{sub:Multiple-faces}. We will then use this to study the global
generating function for rooted hypermaps $H(m,n,\lambda;x)$ in Sections
\ref{sec:Additional-generating-functions} and \ref{sec:Hypermap-generating-functions},
before looking at the process of evaluating these functions and extracting
hypermap counts from them in Section \ref{sec:Evaluating}.

\section{Representing rooted hypermaps\label{sec:Representing-rooted-hypermaps}}

A thorough discussion of hypermaps can be found in \cite{Lando2004}.

A \emph{hypermap} is a generalisation of a map (a graph embedded on
an orientable surface so that its complement consists only of regions
which are homeomorphic to the unit disc) in which the edges are capable
of having any positive number of connections to vertices instead of
the usual two. Hypermaps can be thought of as equivalent to bipartite
bicoloured maps on the same surface (with the two colours of vertices
in the map representing the vertices and edges of the hypermap) \cite{Walsh1975}.
Each edge-vertex connection (the edges in the eqivalent bipartite
map) is called a \emph{dart}, and a \emph{rooted hypermap} is a hypermap
where one of the darts has been labelled as the \emph{root}, making
it distinct from the others.

The embedding of a hypergraph (the analogue of a graph) on an orientable
surface to produce a hypermap can be represented in other ways which
do not require explicit consideration of the surface involved. These
are called \emph{combinatorial embeddings}, and one such method uses
an object called a 3-constellation:

\bigskip{}

\begin{defn}
A 3-constellation is an ordered triple $\{\xi,\eta,\chi\}$ of permutations
acting on some set $R$, satisfying the following two properties:
\begin{enumerate}
\item The group generated by $\{\xi,\eta,\chi\}$ acts transitively on $R$.
\item The product $\xi\eta\chi$ equals the identity.
\end{enumerate}
\end{defn}
\bigskip{}

A hypermap $H$ with $r$ darts can be expressed using a 3-constellation
on the set $R=[1\ldots r]$. If the elements of $R$ are associated
with the darts in $H$, then the actions of $\xi$, $\eta$ and $\chi$
are to cycle the darts around their adjacent faces, edges and vertices
respectively. For our purposes here, the important result is that
the number of faces, edges and vertices in a hypermap $H\equiv\{\xi,\eta,\chi\}$
are given by the number of cycles in $\xi$, $\eta$ and $\chi$ respectively
\cite[p 43]{Lando2004}.

Two 3-constellations $\{\xi,\eta,\chi\}$ and $\{\xi',\eta',\chi'\}$
are isomorphic to each other if they are related by the bijection
\begin{equation}
\tau\,:\,\{\xi,\chi,\eta\}\rightarrow\{\xi',\chi',\eta'\}=\{\tau\xi\tau^{-1},\tau\eta\tau^{-1},\tau\chi\tau^{-1}\}\label{eq:tau-mapping}
\end{equation}
for some permutation $\tau$ \cite[p 8]{Lando2004}. are isomorphic
to each other (the action of $\tau$ on the hypermap as given above
simply involves a reordering of the darts in the set $R$ without
changing the connectivity). With this representation of hypermap isomorphism
established, we define rooted hypermaps as hypermaps with the aditional
property that they are only equivalent under the action of $\tau$
only when $\tau(1)=1$ (i.e. choosing for the root dart to have the
label $1$).

\subsection{One-face hypermaps\label{sub:One-face-hypermaps}}

\begin{figure}
\hfill{}\includegraphics[width=10cm]{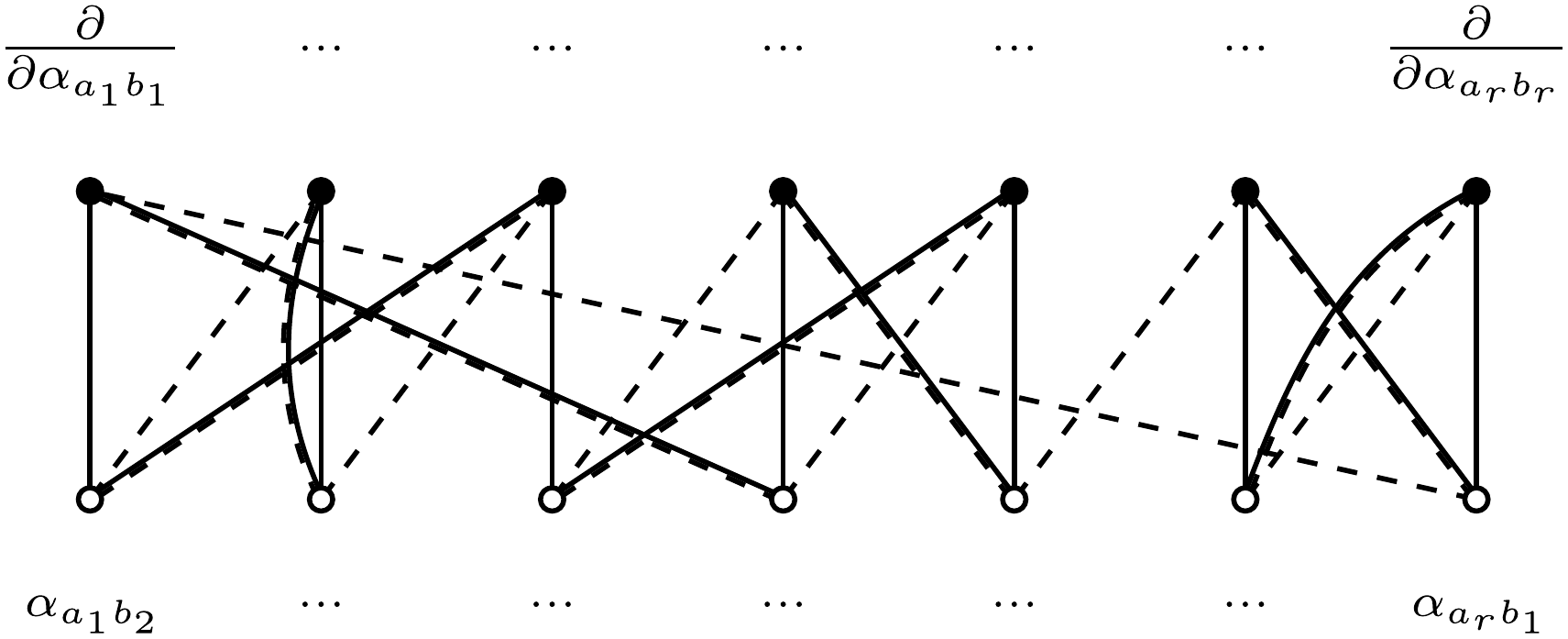}\hfill{}

\protect\caption{\label{fig:one-face-diag}A diagrammatic representation of a one-face
rooted hypermap $H\equiv\{\xi,\eta,\chi\}$, with $\xi=(12\ldots r)$
and $\eta=(1453)(2)(67)$, referred to as a \emph{ladder diagram}.
Mapping from black to white, the single dashed lines represent $\xi^{-1}$
and the double (solid and dashed) lines represent $\eta$. The number
of edges in the hypermap equals the number of closed solid loops,
while the number of vertices equals the number of closed dashed loops
(the double lines count as either solid or dashed). Also shown are
the correspondence between the nodes and the terms in (\ref{eq:P_r}),
which is used in the evaluation of $P_{r}(m,n)$.}
\end{figure}

In our previous paper paper, we used the 3-constellation representation
of hypermap embedding to define a diagrammatic representation of rooted
hypermaps \cite{Dyer2014a} (see Figure \ref{fig:one-face-diag}).
If we define
\[
\xi=(12\ldots r),
\]
then the set of rooted hypermaps with one face is equivalent to the
set of permutations $\eta$ on $[1\ldots r${]} (i.e. the symmetric
group $Sym_{r}$) through the then any rooted hypermap with one face
is equivalent to , then the set of nonisomorphic rooted hypermaps
with $r$ darts is equivalent to the set of permutations all $\eta$
on $[1\ldots r]$ (i.e. the symmetric group $Sym_{r}$) through the
bijection
\[
\eta\rightarrow H_{\eta}\equiv\{\xi,\eta,\eta^{-1}\xi^{-1}\}
\]
(as $\xi$ is fixed, no two choices of $\eta$ will result in equivalent
rooted hypermaps). The diagrammatic representation in Figure \ref{fig:one-face-diag}
(here referred to as a ladder diagram) allows us to quickly count
the number of vertices and edges in $H_{\eta}$ by counting closed
loops (the numbers of which are equal to the number permutations in
$\eta$ and $\xi\eta$).

We showed that these diagrams also arise in the evaluation of the
function

\begin{equation}
P_{r}(m,n)=\left.\frac{\partial}{\partial\alpha_{a_{1}b_{1}}}\ldots\frac{\partial}{\partial\alpha_{a_{r}b_{r}}}(\alpha_{a_{1}b_{2}}\ldots\alpha_{a_{r}b_{1}})\right|_{\alpha=0},\label{eq:P_r}
\end{equation}
where $\alpha$ is an $m\times n$ real matrix: when the multiderivative
in (\ref{eq:P_r}) is fully expanded out, it has the form
\[
P_{r}(m,n)=\sum_{\eta\in Sym_{r}}\prod_{i=1}^{r}\delta[a_{i},a_{\eta(i)}]\delta[b_{i},b_{\xi\eta(i)}]=\sum_{\eta\in Sym_{r}}m^{cyc(\eta)}n^{cyc(\xi\eta)}
\]
where $cyc(\sigma)$ is the number of cycles in the permutation $\sigma$.
As $cyc(\eta)$ and $cyc(\xi\eta)$ are respectively the number of
edges and vertices in the rooted hypermap $H_{\eta}\equiv\{\xi,\eta,\eta^{-1}\xi^{-1}\}$,
and the number of faces in $H_{\eta}$ is $cyc(\xi)=1$, $P_{r}$
is therefore the generating function for enumerating one-face rooted
hypermaps with $r$ darts by number of edges and vertices. $P_{r}$
can also be computed using ladder diagrams as above, then, each diagram
contributing a single $m^{cyc(\eta)}n^{cyc(\xi\eta)}$ term.

We also showed, through Gaussian integration, that
\begin{eqnarray}
P_{r}(m,n) & = & \int_{\mathbb{C}^{mn}}d^{2mn}xe^{-x\cdot x}x_{a_{1}b_{1}}x_{a_{1}b_{2}}^{*}\ldots x_{a_{r}b_{r}}x_{a_{r}b_{1}}^{*}\nonumber \\
 & = & \frac{\Gamma(mn+r)}{\Gamma(mn)}\langle\text{Tr}[(\hat{\rho}^{A})^{r}]\rangle,\label{eq:P_r-quantum}
\end{eqnarray}
where $\hat{\rho}^{A}$ is the reduced density operator of an $m$-dimensional
subsystem of an $mn$-dimensional bipartite quantum system, and the
mean is being taken over all possible pure states of the overall bipartite
system. What this means is explained in more detail in \cite{Dyer2014a},
but the facts of most relevance here are that (\ref{eq:P_r-quantum})
is symmetric in $m$ and $n$, and, when $n\ge m$, the mean can be
represented as an integral over the eigenvalues $(p_{1},\ldots,p_{m})$
of $\hat{\rho}^{A}$ with the density function \cite{Lloyd1988,Dyer2014}
\[
P(p_{1},\ldots,p_{m})dp_{1}\ldots dp_{m}\propto\delta\left(1-\sum_{i=1}^{m}p_{1}\right)\Delta^{2}(p_{1},\ldots,p_{m})\prod_{k=1}^{m}p_{k}^{n-m}dp_{k},
\]
where $\Delta^{2}(p_{1},\ldots,p_{m})$ is the Vandermonde discriminant
of the eigenvalues, giving
\[
\langle\text{Tr}[(\hat{\rho}^{A})^{r}]\rangle\propto\int\delta\left(1-\sum_{i=1}^{m}p_{1}\right)\Delta^{2}(p_{1},\ldots,p_{m})\prod_{k=1}^{m}(p_{k}^{n-m}dp_{k})\sum_{j=1}^{m}p_{j}^{r}.
\]
Using a co-ordinate substitution given in \cite{Page1993}, we multiply
this by the factor
\[
\frac{1}{\Gamma(mn+r)}\int_{0}^{\infty}\lambda^{mn+r-1}e^{-\lambda}d\lambda
\]
and define $q_{i}=\lambda p_{i}$, integrating over $\lambda$ in
order to remove the $\delta$ function, giving
\[
\langle\text{Tr}[(\hat{\rho}^{A})^{r}]\rangle\propto\frac{1}{\Gamma(mn+r)}\int\Delta^{2}(q_{1},\ldots,q_{m})\prod_{k=1}^{m}(e^{-q_{k}}q_{k}^{n-m}dq_{k})\sum_{j=1}^{m}q_{j}^{r}.
\]
Finally, we normalise this by using the fact that, as $n\ge m$, $\langle\text{Tr}[(\hat{\rho}^{A})^{0}]\rangle=\langle\text{Tr}[I_{m}]\rangle=m$,
giving
\[
\langle\text{Tr}[(\hat{\rho}^{A})^{r}]\rangle=\frac{\Gamma(mn)}{\Lambda_{mn}\Gamma(mn+r)}\int\Delta^{2}(q_{1},\ldots,q_{m})\prod_{k=1}^{m}(e^{-q_{k}}q_{k}^{n-m}dq_{k})\sum_{j=1}^{m}q_{j}^{r}
\]
where
\[
\Lambda_{mn}=\int\Delta^{2}(q_{1},\ldots,q_{m})\prod_{k=1}^{m}(e^{-q_{k}}q_{k}^{n-m}dq_{k}).
\]

We now need to generalise both the concept of ladder diagrams and
the closely connected $P_{r}(m,n)$ functions in order to enumerate
hypermaps with more than one face, and we will define these generalisations
in the next section.

\subsection{Multiple faces\label{sub:Multiple-faces}}

\begin{figure}
\hfill{}\includegraphics[width=9cm]{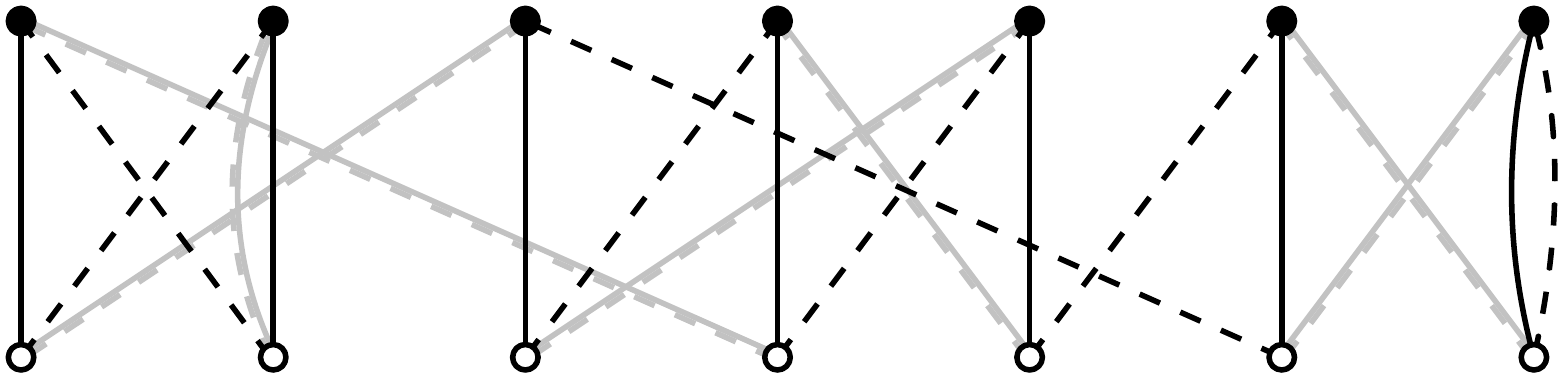}\hfill{}

\protect\caption{\label{fig:three-face-diag}A ladder diagram of a three-face rooted
hypermap $H\equiv\{\xi,\eta,\chi\}$. As in Figure \ref{fig:one-face-diag},
$\eta=(1453)(2)(67)$, but in this case $\xi=(12)(3456)(7)$. The
double edges have been greyed out for clarity. Summation over possible
permutations $\eta$ in this case generates $P_{2,4,1}(m,n)$}
\end{figure}

In Figure \ref{fig:one-face-diag}, there was just one loop consisting
only of single lines (i.e. single solid lines and single dotted lines,
but not the solid/dotted paired lines), which corresponded to the
single cycle in $\xi$, and therefore to the face in the associated
rooted hypermap. It follows that a hypermap with multiple faces would
have a diagram with multiple such loops (i.e. $\xi$ has multiple
cycles, one for each face). An example of such a diagram is shown
in Figure \ref{fig:three-face-diag}.

In these diagrams, the solid lines in combination with the dotted
lines defined by $\xi$ can be thought of as a fixed backbone, on
which the double lines given by $\eta$ are superimposed. We described
in Section \ref{sub:One-face-hypermaps} how, when $\xi=(12\ldots r)$,
we can sum over all possible $\eta$ and in each case count the solid
and dotted loops in order to get a generating function for enumerating
rooted one-face hypermaps with $r$ darts. We also showed that this
function was equivalent to (\ref{eq:P_r}) and (\ref{eq:P_r-quantum}).

We can apply the same procedure to diagrams with other backbones.
Looking at (\ref{eq:P_r}) and (\ref{eq:P_r-quantum}), we can see
that the single cycle of length $r$ in $\xi$ corresponds to a term
$\text{Tr}[(\hat{\rho}^{A})^{r}]$ in the quantum expression for $P_{r}$.
By extension it follows that if consists of $N$ cycles with lengths
$r_{1},r_{2},\ldots,r_{N}$ (e.g. the $\xi$ used in Figure (\ref{fig:three-face-diag})
corresponds to $N=3$, $\{r_{1},r_{2},r_{3}\}=\{2,4,1\}$), summing
over all ladder diagrams with such a backbone and following the same
procedure as in section \ref{sub:One-face-hypermaps}, we get the
function
\begin{eqnarray}
P_{r_{1}r_{2}\ldots r_{N}}(m,n) & = & \frac{\Gamma(mn+\Sigma_{i=1}^{N}r_{i})}{\Gamma(mn)}\left\langle \prod_{j=1}^{N}\text{Tr}[(\hat{\rho}^{A})^{r_{j}}]\right\rangle \nonumber \\
 & = & \frac{1}{\Lambda_{mn}}\int\Delta^{2}(q_{1},\ldots,q_{m})\prod_{k=1}^{m}(e^{-q_{k}}q_{k}^{n-m}dq_{k})\nonumber \\
 &  & \qquad\times\prod_{i=1}^{N}\sum_{j=1}^{m}q_{j}^{r_{i}},\label{eq:P_mult}
\end{eqnarray}
again valid when $m\le n$.

These functions are not yet useful generating functions, however,
for two reasons: the sum over diagrams used to calculate them can
include disconnected diagrams (hypermaps are necessarily connected,
so cannot correspond to disconnected ladder diagrams), and any two
hypermaps which are related through cyclic permutation of one of the
cycles in $\xi$ are equivalent, producing a degeneracy. We will overcome
these issues in the following sections; first we will define some
additional functions in terms of the various $P_{r_{1}\ldots}$ in
Section \ref{sec:Additional-generating-functions} which will account
for the presence of disconnected diagrams, and then we will use these
to construct global generating functions for counting rooted hypermaps
in Section \ref{sec:Hypermap-generating-functions}.

\section{Connected diagrams\label{sec:Additional-generating-functions}}

As stated in the previous section, the functions $P_{r_{1}\ldots}$
defined in (\ref{eq:P_mult}) are generating functions each of which
count over a set of ladder diagrams. As defined, however, they include
disconnected diagrams in this count, whereas we require generating
functions which count only over connected diagrams. In this section
we will define such functions.

For any given $P_{r_{1}\ldots r_{N}}$, define $\bar{P}_{r_{1}\ldots r_{N}}$
to be a generating function defined as a summation over the same set
of diagrams as $P_{r_{1}\ldots}$ except with any disconnected diagrams
excluded. In the one-loop case, $P_{r}=\bar{P}_{r}$ as all one-loop
ladder diagrams are connected. When there is more than one loop present,
$P_{r_{1}\ldots}$ may be factorised in terms of $\bar{P}_{r_{1}\ldots}$
using the fact that any disconnected ladder diagram can be split into
a number of disjoint connected subdiagrams. We write this factorisation
\begin{equation}
P_{rr_{1}\ldots r_{N}}=\bar{P}_{rr_{1}\ldots r_{N}}+\bar{P}_{r}P_{r_{1}\ldots r_{N}}+\sum_{u\cup v=\{r_{1}\ldots r_{N}\}}^{u,v\ne\emptyset}\bar{P}_{ru_{1}\ldots}P_{v_{1}\ldots},\label{eq:Pbar-full}
\end{equation}
where the summation is over all partitions of the ordered multiset
$\{r_{1}\ldots r_{N}\}$ into two disjoint non-empty subfamilies.
This factorisation works by breaking up each diagram in the summation
into its disjoint connected subdiagrams and considering which subdiagram
the loop of length $r$ is in. This loop is factored out in a $\bar{P}$
term. As an example, when $N=3$,
\begin{eqnarray*}
P_{rabc} & = & \bar{P}_{rabc}+\bar{P}_{r}P_{abc}+\bar{P}_{ra}P_{bc}+\bar{P}_{rb}P_{ac}+\bar{P}_{rc}P_{ab}\\
 &  & +\bar{P}_{rab}P_{c}+\bar{P}_{rac}P_{b}+\bar{P}_{rbc}P_{a}.
\end{eqnarray*}
If the definition of $P_{r\ldots}$ is extended to include 
\[
P(m,n)=1,
\]
which is consistent with (\ref{eq:P_mult}), then (\ref{eq:Pbar-full})
can be written more simply as
\begin{equation}
P_{rr_{1}\ldots r_{N}}=\sum_{u\cup v=\{r_{1}\ldots r_{N}\}}\bar{P}_{ru_{1}\ldots}P_{v_{1}\ldots},\label{eq:Pbar-simplified}
\end{equation}
where $u$ and $v$ are now allowed to be empty.

(\ref{eq:Pbar-full}) and (\ref{eq:P_mult}) can be used recusively
to construct integral expressions for any given $\bar{P}_{r\ldots}$.
However, when constructing the global generating function in Section
\ref{sec:Hypermap-generating-functions}, it will be more useful to
work with the functions 
\begin{eqnarray}
\Pi_{r}^{(N)}(m,n;x) & = & \sum_{r_{1}=1}^{\infty}\frac{x^{r_{1}}}{r_{1}}\ldots\sum_{r_{N}=1}^{\infty}\frac{x^{r_{N}}}{r_{N}}P_{rr_{1}\ldots r_{N}}(m,n)\label{eq:Pi}\\
\bar{\Pi}_{r}^{(N)}(m,n;x) & = & \sum_{r_{1}=1}^{\infty}\frac{x^{r_{1}}}{r_{1}}\ldots\sum_{r_{N}=1}^{\infty}\frac{x^{r_{N}}}{r_{N}}\bar{P}_{rr_{1}\ldots r_{N}}(m,n)\label{eq:Pi_bar}\\
\Sigma^{(N)}(m,n;x) & = & \sum_{r_{1}=1}^{\infty}\frac{x^{r_{1}}}{r_{1}}\ldots\sum_{r_{N}=1}^{\infty}\frac{x^{r_{N}}}{r_{N}}P_{r_{1}\ldots r_{N}}(m,n).\label{eq:Sigma}
\end{eqnarray}
These are specifically defined as formal power series in $x$; in
general these series will be divergent if treated as functions of
a finite parameter $x$. Noting the special cases $\Pi_{r}^{(0)}(m,n;x)=\bar{\Pi}_{r}^{(0)}(m,n;x)=P_{r}(m,n)$
and $\Sigma^{(0)}(m,n;x)=1$, we use (\ref{eq:Pbar-simplified}) to
derive the recursion relation
\begin{eqnarray}
\Pi_{r}^{(N)}(m,n;x) & = & \sum_{r_{1}=1}^{\infty}\frac{x^{r_{1}}}{r_{1}}\ldots\sum_{r_{N}=1}^{\infty}\frac{x^{r_{N}}}{r_{N}}P_{rr_{1}\ldots r_{N}}(m,n)\nonumber \\
 & = & \sum_{r_{1}=1}^{\infty}\frac{x^{r_{1}}}{r_{1}}\ldots\sum_{r_{N}=1}^{\infty}\frac{x^{r_{N}}}{r_{N}}\sum_{u\cup v=\{r_{1}\ldots r_{N}\}}\bar{P}_{ru_{1}\ldots}(m,n)P_{v_{1}\ldots}(m,n)\nonumber \\
 & = & \sum_{k=0}^{N}\binom{N}{k}\bar{\Pi}_{r}^{(k)}(m,n;x)\Sigma^{(N-k)}(m,n;x),\label{eq:Pi-recurse}
\end{eqnarray}
with the sum over partitions in (\ref{eq:Pbar-simplified}) becoming
a sum over the different possible sizes of the partitions instead.

In addition to these three sets of series, we will need one more series
to be defined:
\begin{equation}
F(m,n,\lambda;x)=\sum_{N=0}^{\infty}\frac{\lambda^{N}}{N!}\Sigma^{(N)}(m,n;x).\label{eq:F}
\end{equation}
This series' derivative satisfies
\begin{eqnarray}
x\frac{\partial}{\partial x}F(m,n,\lambda;x) & = & x\frac{\partial}{\partial x}\sum_{N=0}^{\infty}\frac{\lambda^{N}}{N!}\sum_{r_{1}=1}^{\infty}\frac{x^{r_{1}}}{r_{1}}\ldots\sum_{r_{N}=1}^{\infty}\frac{x^{r_{N}}}{r_{N}}P_{r_{1}\ldots r_{N}}(m,n)\nonumber \\
 & = & \sum_{N=0}^{\infty}\frac{\lambda^{N}}{N!}\cdot N\sum_{r_{1}=1}^{\infty}x^{r_{1}}\sum_{r_{2}=1}^{\infty}\frac{x^{r_{2}}}{r_{2}}\ldots\sum_{r_{N}}^{\infty}\frac{x^{r_{N}}}{r_{N}}P_{r_{1}\ldots r_{N}}(m,n)\nonumber \\
 & = & \sum_{N=1}^{\infty}\frac{\lambda^{N}}{(N-1)!}\sum_{r=1}^{\infty}x^{r}\Pi_{r}^{(N-1)}(m,n;x)\nonumber \\
 & = & \sum_{N=0}^{\infty}\frac{\lambda^{N+1}}{N!}\sum_{r=1}^{\infty}x^{r}\Pi_{r}^{(N)}(m,n;x),\label{eq:F-derivative}
\end{eqnarray}
and when $x$ is set to zero,
\begin{eqnarray}
F(m,n,\lambda;0) & = & \sum_{N=0}^{\infty}\frac{\lambda^{N}}{N!}\Sigma^{(N)}(m,n;0)\nonumber \\
 & = & \Sigma^{(0)}(m,n;0)\nonumber \\
 & = & 1.\label{eq:F_0}
\end{eqnarray}

With these various functions and series defined, we can proceed to
define the global generating function for enumerating rooted hypermaps
in terms $F$. After doing this in the next section, we will return
to $F$ in Section \ref{sec:Evaluating} and discuss methods for evaluating
it.

\section{Hypermap generating functions\label{sec:Hypermap-generating-functions}}

Let us define $H(m,n,\lambda;x)$ as the generating function for enumerating
all rooted hypermaps in the form
\begin{equation}
H(m,n,\lambda;x)=\sum_{e,v,f,r}H_{vefr}m^{v}n^{e}\lambda^{f}x^{r},\label{eq:counting-basic}
\end{equation}
where $H_{vefr}$ is the number of rooted hypermaps with v edges,
$e$ vertices, $f$ faces and $r$ darts. As with the expressions
used in Section \ref{sec:Additional-generating-functions}, this generating
function is strictly speaking a formal power series in $x$ which
will be dievergent in general. However, if we write
\[
H(m,n,\lambda;x)=\sum_{r=0}^{\infty}H_{r}(m,n,\lambda)x^{r},
\]
then the individual $H_{r}$ will be well-behaved polynomial functions
enumerating all rooted hypermaps with $r$ darts. Ultimately our aim
will be to compute these.

It is worth noting the symmetry properties of these functions:

\bigskip{}

\begin{thm}
\label{thm:symmetric}Each $H_{r}$ is completely symmetric in its
three parameters, or, equivalently, 
\[
H_{r}(m,n,\lambda)=H_{r}(n,m,\lambda)=H_{r}(m,\lambda,n).
\]
\end{thm}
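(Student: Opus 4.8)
The plan is to prove the symmetry combinatorially, by exhibiting the classical duality involutions on hypermaps and checking that, for \emph{rooted} hypermaps, they preserve both the number of darts and the root while permuting the numbers of faces, edges and vertices. Recall from Section \ref{sec:Representing-rooted-hypermaps} that a hypermap with $r$ darts is encoded by a transitive triple $\{\xi,\eta,\chi\}$ on $R=[1\ldots r]$ with $\xi\eta\chi$ equal to the identity, that its numbers of faces, edges and vertices are $cyc(\xi)$, $cyc(\eta)$ and $cyc(\chi)$, and that in the notation of (\ref{eq:counting-basic}) these are tracked by $\lambda$, $m$ and $n$ respectively. Since complete symmetry of $H_r$ is equivalent to invariance of $H_{vefr}$ under all permutations of $(cyc(\xi),cyc(\eta),cyc(\chi))$, and since the transpositions $(m\,n)$ and $(n\,\lambda)$ generate $S_3$, it suffices to produce two bijections on rooted hypermaps realising these two swaps.

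First I would define the two maps
\[
D_{1}:\{\xi,\eta,\chi\}\mapsto\{\chi^{-1},\eta^{-1},\xi^{-1}\},\qquad D_{2}:\{\xi,\eta,\chi\}\mapsto\{\xi^{-1},\chi^{-1},\eta^{-1}\},
\]
and verify that each sends a 3-constellation to a 3-constellation: using $\xi\eta\chi=\text{id}$ one checks directly that $\chi^{-1}\eta^{-1}\xi^{-1}=(\xi\eta\chi)^{-1}=\text{id}$ and, from $\chi^{-1}=\xi\eta$, that $\xi^{-1}\chi^{-1}\eta^{-1}=\text{id}$, while transitivity is preserved because $\langle\xi^{-1},\eta^{-1},\chi^{-1}\rangle=\langle\xi,\eta,\chi\rangle$. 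Because $cyc(\sigma)=cyc(\sigma^{-1})$, the map $D_{1}$ interchanges $cyc(\xi)$ and $cyc(\chi)$ (faces and vertices, i.e.\ $\lambda\leftrightarrow n$) while fixing $cyc(\eta)$, and $D_{2}$ interchanges $cyc(\eta)$ and $cyc(\chi)$ (edges and vertices, i.e.\ $m\leftrightarrow n$) while fixing $cyc(\xi)$; in each case $|R|=r$ is unchanged.

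The step needing care is passing from constellations to rooted hypermaps. Here I would note that $D_{1}$ and $D_{2}$ do not act on the set $R$ at all, so the root dart $1$ is untouched, and that they commute with the conjugation (\ref{eq:tau-mapping}) since $(\tau\sigma\tau^{-1})^{-1}=\tau\sigma^{-1}\tau^{-1}$. Consequently they descend to well-defined maps on isomorphism classes, and in particular respect the restriction $\tau(1)=1$ that defines rooted equivalence; being involutions, they are bijections. The swap $D_{2}$ then yields $H_{r}(m,n,\lambda)=H_{r}(n,m,\lambda)$ and the swap $D_{1}$ yields $H_{r}(m,n,\lambda)=H_{r}(m,\lambda,n)$, and together these generate invariance of $H_{vefr}$ under every permutation of $(v,e,f)$, which is exactly the claimed complete symmetry.

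I expect the main obstacle to be conceptual rather than computational: the construction of $H$ through $F$, $\Sigma^{(N)}$ and the $P_{r_{1}\ldots r_{N}}$ is manifestly asymmetric, singling out the backbone $\xi$ (hence the faces, and the variable $\lambda$) and treating $m$ and $n$ through the matrix integral, so none of the three symmetries is visible at that level. The swap $m\leftrightarrow n$ can in fact be read off analytically, since each $P_{r_{1}\ldots r_{N}}(m,n)$ is a product of traces of $\hat{\rho}^{A}$ and is symmetric in $m$ and $n$ by the remark following (\ref{eq:P_r-quantum}); but the swaps involving $\lambda$ have no comparably transparent analytic origin, and it is these that make the combinatorial duality argument above the natural route. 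A secondary point to check carefully is that connectivity (transitivity of the generated group), which is what restricts the count to the connected diagrams underlying $H$, is genuinely preserved by $D_{1}$ and $D_{2}$, so that the bijections are between connected rooted hypermaps as required.
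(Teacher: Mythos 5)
Your proposal is correct and follows essentially the same route as the paper's own proof: the paper likewise proves the symmetry by exhibiting two involutions on 3-constellations ($T_{ef}$ and $T_{ve}$, which invert the permutations and permute their roles) that swap two of the cycle-count statistics while fixing the third, and then invokes bijectivity to conclude $H_{vefr}$ is invariant under the transpositions generating $S_3$. Your write-up is in fact somewhat more careful than the paper's, since you explicitly verify the constellation axioms (product equal to the identity, transitivity) and the compatibility with rooted isomorphism via $(\tau\sigma\tau^{-1})^{-1}=\tau\sigma^{-1}\tau^{-1}$, points the paper passes over silently.
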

\begin{proof}
This result follows easily from considering a rooted hypermap as a
3-constellation $\{\xi,\chi,\eta\}$. The mapping
\[
T_{ef}\,:\,\{\xi,\chi,\eta\}\rightarrow\{\chi^{-1},\xi^{-1},\eta^{-1}\}
\]
maps rooted hypermaps with $r$ darts onto each other, and specifically
maps a rooted hypermap with $v$ vertices, $e$ edges and $f$ faces
onto one with $v$ vertices, $f$ faces and $e$ edges. As $T_{ef}$
is bijective (it is its own inverse), this means that $H_{vefr}=H_{vfer}$,
and so
\[
H_{r}(m,n,\lambda)=\sum_{v,e,f}H_{vefr}m^{v}n^{e}\lambda^{f}=H_{r}(m,\lambda,n).
\]

Similarly, the mapping
\[
T_{ve}\,:\,\{\xi,\chi,\eta\}\rightarrow\{\xi^{-1},\eta^{-1},\chi^{-1}\}
\]
is a bijection which swaps the number of edges and vertices in each
rooted hypermaps, meaning $H_{vefr}=H_{evfr}$ and 
\[
H_{r}(m,n,\lambda)=H_{r}(n,m,\lambda).
\]

\end{proof}
\bigskip{}

\begin{figure}
\hfill{}\includegraphics[width=1\textwidth]{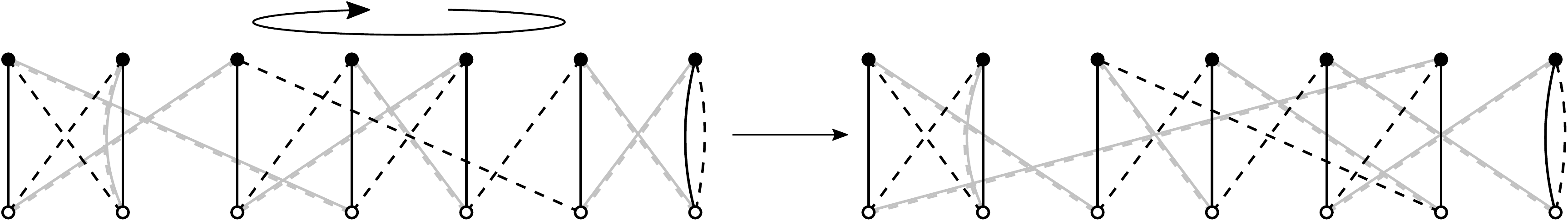}\hfill{}

\protect\caption{\label{fig:A-cyclic-permutation}A cyclic permutation one place to
the left applied to the second loop in a ladder diagram. As the two
diagrams are isomorphic they are equivalent to the same hypermap,
but they contribute separately to the function $\bar{P}_{241}$ as
the diagrams themselves are distinct. The total degeneracy in this
case is $r_{2}\cdot r_{3}=4$ (there is no degeneracy associated with
the first loop as it contains the root, and is therefore fixed against
permutation).}

\end{figure}

While we cannot evaluate $H$ directly, we are able to define it in
relation to the series $F$ defined previously:

\bigskip{}

\begin{thm}
\label{thm:P-given-F}The generating function $H$ satisfies the relation
\[
H(m,n,\lambda;x)F(m,n,\lambda;x)=x\frac{\partial}{\partial x}F(m,n,\lambda;x).
\]
\end{thm}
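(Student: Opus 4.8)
The plan is to avoid working with $H$ directly, and instead to show that both sides of the claimed identity, once divided by $F$, reduce to the same ``connected, rooted'' generating object. Everything is to be read as an identity of formal power series in $x$, so all the rearrangements below are understood coefficient-wise. The two ingredients already in hand are the derivative formula (\ref{eq:F-derivative}), which writes $x\frac{\partial}{\partial x}F = \sum_{N\ge0}\frac{\lambda^{N+1}}{N!}\sum_{r\ge1}x^{r}\Pi_r^{(N)}$, and the recursion (\ref{eq:Pi-recurse}), which expresses each $\Pi_r^{(N)}$ in terms of the connected objects $\bar{\Pi}_r^{(k)}$ and the $\Sigma^{(N-k)}$.

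First I would substitute (\ref{eq:Pi-recurse}) into (\ref{eq:F-derivative}), replace $\binom{N}{k}/N!$ by $1/(k!\,(N-k)!)$, and reindex the double sum by $k$ and $j:=N-k$. The summand then factors as a $k$-dependent term times a $j$-dependent term, so the double sum splits as a product of two independent series; the $j$-series is visibly $\sum_{j\ge0}\frac{\lambda^{j}}{j!}\Sigma^{(j)}=F$ by (\ref{eq:F}). This yields $x\frac{\partial}{\partial x}F=\tilde{H}\,F$ with
\[
\tilde{H}(m,n,\lambda;x)=\lambda\sum_{k\ge0}\frac{\lambda^{k}}{k!}\sum_{r\ge1}x^{r}\,\bar{\Pi}_r^{(k)}(m,n;x).
\]
Since $F(m,n,\lambda;0)=1$ by (\ref{eq:F_0}), $F$ is invertible as a formal power series, so this factorisation is genuine and it remains only to identify $\tilde{H}$ with the hypermap generating function $H$ of (\ref{eq:counting-basic}).

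This identification is the crux. Expanding $\bar{\Pi}_r^{(k)}$ via (\ref{eq:Pi_bar}) turns $\tilde{H}$ into a sum over the connected counts $\bar{P}_{rr_1\ldots r_k}$, weighted by $\lambda^{k+1}/k!$ and by $x^{r}\prod_{i=1}^{k}x^{r_i}/r_i$. I would then interpret each connected ladder diagram counted by $\bar{P}_{rr_1\ldots r_k}$ as a rooted hypermap: the diagram is connected, its backbone $\xi$ has $k+1$ cycles (so $\lambda^{k+1}$ records the faces), the distinguished loop of length $r$ is the face containing the root dart (set up $\xi$ so that dart $1$ lies in this loop), and the weight $m^{cyc(\eta)}n^{cyc(\xi\eta)}$ records edges and vertices. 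The root dart pins the distinguished loop, so it carries weight $x^{r}$ with no compensating factor, whereas the remaining $k$ loops are each free to be cyclically rotated and may be reordered among themselves. The claim to verify is that the factors $1/r_i$ (one per non-root loop) together with $1/k!$ exactly cancel these two degeneracies, so that every rooted hypermap with $r$ darts contributes $m^{v}n^{e}\lambda^{f}x^{r}$ with coefficient exactly $1$, giving $\tilde{H}=H$.

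The hard part will be pinning down that net multiplicity of $1$. I would fix a rooted hypermap and count, for $\xi$ of the appropriate cycle type with dart $1$ in the length-$r$ cycle, how many $\eta$ realise it. The key structural fact is that the centraliser of a transitively acting permutation group acts freely, so a rooted hypermap (root dart fixed) has trivial automorphism group; consequently the number of realising $\eta$ for one fixed ordered backbone is $|\mathrm{Cent}(\xi)|/(r\,m_r)$, where $m_r$ is the number of length-$r$ cycles. Writing $|\mathrm{Cent}(\xi)|=\prod_{\ell}\ell^{m_\ell}m_\ell!$ and combining this with the number $k!/\prod_{\ell}\mu_\ell!$ of distinct orderings of the non-root faces (of multiplicities $\mu_\ell$) and with the prefactors $1/k!$ and $1/\prod_i r_i$, a short calculation shows every factor telescopes to $1$. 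I expect this bookkeeping — in particular keeping the root face separate from any non-root faces of the same length inside $m_r$ and inside $\prod_\ell m_\ell!$ — to be the only delicate point; the rest is formal power-series manipulation.
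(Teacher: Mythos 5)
Your proposal is correct and is essentially the paper's own proof run in reverse: the factorisation of $x\frac{\partial}{\partial x}F$ into $\tilde{H}F$ via (\ref{eq:Pi-recurse}), (\ref{eq:F-derivative}) and the binomial reindexing is exactly the computation in the paper (your $\tilde{H}$ is precisely the paper's expression (\ref{eq:P-by-Pi})), and the identification $\tilde{H}=H$ rests on the same degeneracy count $(N-1)!\,r_{2}\cdots r_{N}$ per rooted hypermap. The only difference is presentational — the paper defines $H$ combinatorially first and then multiplies by $F$, whereas you solve for $x\frac{\partial}{\partial x}F/F$ and then identify it — plus your centraliser/free-action bookkeeping, which supplies a rigorous justification of the degeneracy claim that the paper only argues informally; that calculation does telescope to $1$ as you expect.
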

\begin{proof}
Each $\bar{P}_{r_{1}\ldots r_{N}}$ is a generating function for a
set of rooted hypermaps with $N$ faces (one for each of the loops
in the associated ladder diagrams), and if all possible $\bar{P}_{r_{1}\ldots r_{N}}$
for fixed $r_{1}+\ldots+r_{N}=r$ are summed over, then the resulting
function will include terms for every rooted hypermap with $N$ faces
and $r$ darts, as any such hypermap has at least one associated ladder
diagram which contributes a term to one of the $\bar{P}_{r_{1}\ldots r_{N}}$.
However, each such hypermap with have a total of $(N-1)!r_{2}r_{3}\ldots r_{N}$
such ladder diagrams (the $N-1$ loops of length $r_{2}$ through
$r_{N}$ can be put in any order to get distinct diagrams to get a
degeneracy of $(N-1)!$, and each of these loops can have its nodes
permuted cyclically -- see Figure \ref{fig:A-cyclic-permutation}
-- giving a degeneracy of $r_{2}r_{3}\ldots r_{N}$; in both cases
the $r_{1}$ loop is fixed because it is associated with the root),
so in order to get a generating function which only counts each rooted
hypermap once, each $\bar{P}_{r_{1}\ldots r_{N}}$ must be divided
by this degeneracy.

We therefore write out $H$ explicitly by summing over all $P_{r_{1}\ldots r_{N}}$,
dividing each by $(N-1)!r_{2}r_{3}\ldots r_{N}$, and multiplying
each by $\lambda^{N}x^{r_{1}+\ldots+r_{N}}$ in order to index the
enumeration by number of faces and darts as well. The resulting expression
is

\begin{eqnarray*}
H(m,n,\lambda;x) & = & \sum_{N=1}^{\infty}\frac{\lambda^{N}}{(N-1)!}\sum_{r_{1}=1}^{\infty}x^{r_{1}}\sum_{r_{2}=1}^{\infty}\frac{x^{r_{2}}}{r_{2}!}\ldots\sum_{r_{N}=1}^{\infty}\frac{x^{r_{N}}}{r_{N}}\bar{P}_{r_{1}r_{2}\ldots r_{N}}(m,n)\\
 & = & \sum_{N=0}^{\infty}\frac{\lambda^{N+1}}{N!}\sum_{r=1}^{\infty}x^{r}\sum_{r_{1}=1}^{\infty}\frac{x^{r_{1}}}{r_{1}!}\ldots\sum_{r_{N}=1}^{\infty}\frac{x^{r_{N}}}{r_{N}}\bar{P}_{rr_{1}\ldots r_{N}}(m,n).
\end{eqnarray*}
We simplify this by substituting in (\ref{eq:Pi_bar}):
\begin{equation}
H(m,n,\lambda;x)=\sum_{N=0}^{\infty}\frac{\lambda^{N+1}}{N!}\sum_{r=1}^{\infty}x^{r}\bar{\Pi}_{r}^{(N)}(m,n;x).\label{eq:P-by-Pi}
\end{equation}
Now, mutliplying this by $F$ as defined in (\ref{eq:F}), we get
\begin{eqnarray*}
H(m,n,\lambda;x)F(m,n,\lambda;x) & = & \sum_{N=0}^{\infty}\frac{\lambda^{N+1}}{N!}\sum_{r=1}^{\infty}x^{r}\bar{\Pi}_{r}^{(N)}(m,n;x)\sum_{k=0}^{\infty}\frac{\lambda^{k}}{k!}\Sigma^{(k)}(m,n;x)\\
 & = & \sum_{r=1}^{\infty}x^{r}\sum_{k=0}^{\infty}\sum_{N=0}^{\infty}\frac{\lambda^{N+k+1}}{N!k!}\bar{\Pi}_{r}^{(N)}(m,n;x)\Sigma^{(k)}(m,n;x)\\
 & = & \sum_{r=1}^{\infty}x^{r}\sum_{k=0}^{\infty}\sum_{N=k}^{\infty}\frac{\lambda^{N+1}}{(N-k)!k!}\bar{\Pi}_{r}^{(N-k)}(m,n;x)\Sigma^{(k)}(m,n;x)\\
 & = & \sum_{r=1}^{\infty}x^{r}\sum_{N=0}^{\infty}\sum_{k=0}^{N}\frac{\lambda^{N+1}}{(N-k)!k!}\bar{\Pi}_{r}^{(N-k)}(m,n;x)\Sigma^{(k)}(m,n;x)\\
 & = & \sum_{N=0}^{\infty}\frac{\lambda^{N+1}}{N!}\sum_{r=1}^{\infty}x^{r}\sum_{k=0}^{N}\binom{N}{k}\bar{\Pi}_{r}^{(N-k)}(m,n;x)\Sigma^{(k)}(m,n;x).
\end{eqnarray*}
This has a clear similarity to (\ref{eq:Pi-recurse}), so we substitute
in (\ref{eq:Pi-recurse}) and (\ref{eq:F-derivative}), giving
\begin{eqnarray}
H(m,n,\lambda;x)F(m,n,\lambda;x) & = & \sum_{N=0}^{\infty}\frac{\lambda^{N+1}}{N!}\sum_{r=1}^{\infty}x^{r}\Pi_{r}^{(N)}(m,n;x)\nonumber \\
 & = & x\frac{\partial}{\partial x}F(m,n,\lambda;x).\label{eq:P-F-relation}
\end{eqnarray}

\end{proof}
\bigskip{}

If $F$ and $H$ were both well-behaved functions, this expression
would be sufficient to evaluate $H$ given $F$. As both are formal
power series, however, it is only meaningful to consider this expression
in terms of the terms in these series. Defining the functions $F_{r}(m,n,\lambda)$
such that
\[
F(m,n,\lambda;x)=\sum_{r=0}^{\infty}F_{r}(m,n,\lambda)x^{r},
\]
(\ref{eq:P-F-relation}) becomes
\begin{equation}
\sum_{k=0}^{r}H_{r-k}(m,n,\lambda)F_{k}(m,n,\lambda)=rF_{r}(m,n,\lambda).\label{eq:generate-recurse}
\end{equation}
When $r=0$ this simply gives
\[
H_{0}(m,n,\lambda)=0,
\]
and then we can recursively construct other $H_{r}$ for $r>0$. For
example, the first few are
\begin{eqnarray*}
H_{1}(m,n,\lambda) & = & F_{1}(m,n,\lambda)\\
H_{2}(m,n,\lambda) & = & 2F_{2}(m,n,\lambda)-[F_{1}(m,n,\lambda)]^{2}\\
H_{3}(m,n,\lambda) & = & 3F_{3}(m,n,\lambda)-3F_{1}(m,n,\lambda)F_{2}(m,n,\lambda)+[F_{1}(m,n,\lambda)]^{2},
\end{eqnarray*}
where we have made use of the fact that $F_{0}(m,n,\lambda)=F(m,n,\lambda;0)=0$
as shown in (\ref{eq:F_0}).

All that remains, then, is to evaluate the various $F_{r}$. We will
do this in the next section.

\section{Evaluating $F_{r}$\label{sec:Evaluating}}

We now have the generating function $H$ defined in terms of the series
$F$. The problem of evaluating terms in the $x$-series expansion
of $H$ is therefore equivalent to the problem of evaluating the the
terms in $F$. In this section we will establish an integral representation
of $F$ and then discuss the use of this to explicitly evaluate the
terms $F_{r}$ in $F$.
\begin{thm}
\label{thm:F-integral}The series $F$ has the integral representation
\begin{equation}
F(m,n,\lambda;x)=\frac{1}{\Lambda_{mn}}\int\Delta^{2}(q_{1},\ldots,q_{m})\prod_{k=1}^{m}\left(e^{-q_{k}}q_{k}^{n-m}dq_{k}\sum_{a=0}^{\infty}\frac{\Gamma(\lambda+a)}{a!\Gamma(\lambda)}q_{k}^{a}x^{a}\right)\label{eq:F-integral}
\end{equation}
for positive integers $m$, $n$, $\lambda$ and $x$ satisfying $m\le n$,
where $\Delta(q_{1},\ldots,q_{m})$ is the Vandermonde determinant,
the integral is over the range $0\le q_{k}<\infty$ for all $1\le k\le m$,
\[
\Lambda_{mn}=\int\Delta^{2}(q_{1},\ldots,q_{m})\prod_{k=1}^{m}e^{-q_{k}}q_{k}^{n-m}dq_{k}.
\]
\end{thm}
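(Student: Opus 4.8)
The plan is to unfold every definition down to the eigenvalue integral \eqref{eq:P_mult}, interchange all the summations with the integral (legitimately, once everything is read as a formal power series in $x$), and then recognise that the resulting integrand is an elementary exponential which collapses to a product of factors $(1-q_k x)^{-\lambda}$; expanding each such factor by the generalised binomial theorem then reproduces the stated integrand. I would state at the very start that the asserted identity is one of formal power series in $x$, so that the whole argument can be carried out at the level of $x^r$-coefficients.

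Concretely, I would first substitute \eqref{eq:P_mult} into the definition \eqref{eq:Sigma} of $\Sigma^{(N)}$. Writing $d\mu(q)=\Lambda_{mn}^{-1}\Delta^2(q_1,\dots,q_m)\prod_{k=1}^m e^{-q_k}q_k^{n-m}dq_k$ for the eigenvalue measure, this gives
\[
\Sigma^{(N)}(m,n;x)=\int d\mu(q)\sum_{r_1=1}^\infty\frac{x^{r_1}}{r_1}\cdots\sum_{r_N=1}^\infty\frac{x^{r_N}}{r_N}\prod_{i=1}^N\sum_{j=1}^m q_j^{r_i}.
\]
The key observation is that each index $r_i$ occurs in exactly one factor, so the nested sum factorises as an $N$-th power,
\[
\Sigma^{(N)}(m,n;x)=\int d\mu(q)\,\Phi(q;x)^N,\qquad \Phi(q;x):=\sum_{r=1}^\infty\frac{x^r}{r}\sum_{j=1}^m q_j^r.
\]
Since $\sum_{r\ge1}y^r/r=-\log(1-y)$, the inner series is a logarithm, so $\Phi(q;x)=-\sum_{j=1}^m\log(1-q_j x)$.

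Next I would feed this into the definition \eqref{eq:F} of $F$. Summing the exponential series in $N$ turns $\sum_N \lambda^N \Phi^N/N!$ into $e^{\lambda\Phi}$, and the logarithmic form of $\Phi$ makes this collapse to a product:
\[
F(m,n,\lambda;x)=\int d\mu(q)\,e^{\lambda\Phi(q;x)}=\int d\mu(q)\prod_{k=1}^m(1-q_k x)^{-\lambda}.
\]
Finally I would expand each factor by the generalised binomial theorem, using $\binom{-\lambda}{a}(-1)^a=\lambda(\lambda+1)\cdots(\lambda+a-1)/a!=\Gamma(\lambda+a)/(a!\,\Gamma(\lambda))$, so that $(1-q_k x)^{-\lambda}=\sum_{a\ge0}\frac{\Gamma(\lambda+a)}{a!\,\Gamma(\lambda)}q_k^a x^a$; inserting the explicit form of $d\mu(q)$ then yields \eqref{eq:F-integral}.

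The main point requiring care is that these interchanges — the sums over the $r_i$ with the integral, and the sum over $N$ with the integral — are not justified by ordinary convergence, since $F$ is only a formal (generally divergent) power series in $x$ and the integrals $\int d\mu(q)\prod_k(1-q_kx)^{-\lambda}$ need not converge for fixed $x$. The clean way to handle this is to argue order by order in $x$: the coefficient of $x^r$ in the fully expanded integrand is in every case a finite $\mathbb{Z}[\lambda]$-linear combination of monomials $\prod_k q_k^{a_k}$, whose integral against $d\mu(q)$ is a finite moment, while the coefficient of $x^r$ on the left is the corresponding finite combination of the $P_{r_1\ldots r_N}$. Matching these coefficient-wise shows the two formal power series agree, which is exactly the sense in which \eqref{eq:F-integral} is asserted; I would therefore verify that each manipulation above preserves the $x^r$-coefficient rather than appealing to analytic convergence.
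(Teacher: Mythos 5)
Your proposal is correct and follows essentially the same route as the paper: substitute the eigenvalue-integral form \eqref{eq:P_mult} into the definitions \eqref{eq:Sigma} and \eqref{eq:F}, factorise the nested sums, collapse $\sum_N \lambda^N\Phi^N/N!$ via the logarithm to $\prod_k(1-q_kx)^{-\lambda}$, and expand binomially --- the collapse-and-expand step is exactly the identity the paper isolates as Theorem \ref{thm:Nested-series} in Appendix 1. Your explicit order-by-order justification of the interchanges is in fact slightly more careful than the paper's, which simply declares the manipulation valid ``as a formal power series in $x$''.
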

\begin{proof}
From (\ref{eq:F}) and (\ref{eq:Sigma}) we have that
\begin{equation}
F(m,n,\lambda;x)=\sum_{N=0}^{\infty}\frac{\lambda^{N}}{N!}\sum_{r_{1}=1}^{\infty}\frac{x^{r_{1}}}{r_{1}}\ldots\sum_{r_{N}=1}^{\infty}\frac{x^{r_{N}}}{r_{N}}P_{r_{1}\ldots r_{N}}(m,n).\label{eq:F-expand}
\end{equation}
If we then substitute (\ref{eq:P_mult}) into this, we get that, when
$m\le n$,
\begin{eqnarray}
F(m,n,\lambda;x) & = & \frac{1}{\Lambda_{mn}}\sum_{N=0}^{\infty}\frac{\lambda^{N}}{N!}\sum_{r_{1}=1}^{\infty}\frac{x^{r_{1}}}{r_{1}}\ldots\sum_{r_{N}=1}^{\infty}\frac{x^{r_{N}}}{r_{N}}\int\Delta^{2}(q_{1},\ldots,q_{m})\nonumber \\
 &  & \qquad\times\prod_{k=1}^{m}(e^{-q_{k}}q_{k}^{n-m}dq_{k})\prod_{i=1}^{N}\sum_{j=1}^{m}q_{j}^{r_{i}}\nonumber \\
 & = & \frac{1}{\Lambda_{mn}}\sum_{N=0}^{\infty}\frac{\lambda^{N}}{N!}\int\Delta^{2}(q_{1},\ldots,q_{m})\prod_{k=1}^{m}(e^{-q_{k}}q_{k}^{n-m}dq_{k})\nonumber \\
 &  & \qquad\times\prod_{i=1}^{N}\sum_{j=1}^{m}\sum_{r_{i}=1}^{\infty}\frac{q_{j}^{r_{i}}x^{r_{i}}}{r_{i}}\nonumber \\
 & = & \frac{1}{\Lambda_{mn}}\int\Delta^{2}(q_{1},\ldots,q_{m})\prod_{k=1}^{m}(e^{-q_{k}}q_{k}^{n-m}dq_{k})\nonumber \\
 &  & \qquad\times\sum_{N=0}^{\infty}\frac{\lambda^{N}}{N!}\left(\sum_{j=1}^{m}\sum_{r=1}^{\infty}\frac{q_{j}^{r}x^{r}}{r}\right)^{N}.\label{eq:F-nested}
\end{eqnarray}

This expression is divergent for any given non-zero $x$, as almost
all of the domain of integration has at least one $q_{j}$ such that
$|q_{j}x|>1$, making 
\[
\sum_{r=1}^{\infty}\frac{q_{j}^{r}x^{r}}{r}
\]
diverge. However, we are still able to make more progress by considering
(\ref{eq:F-nested}) as a formal power series in $x$ again. we have
the identity
\[
\sum_{N=0}^{\infty}\frac{\lambda^{N}}{N!}\left(\sum_{j=1}^{m}\sum_{r=1}^{\infty}\frac{q_{j}^{r}x^{r}}{r}\right)^{N}=\prod_{j=1}^{m}\sum_{a_{j}=0}^{\infty}\frac{\Gamma(\lambda+a_{j})}{a_{j}!\Gamma(\lambda)}q_{j}^{a_{j}}x^{a_{j}}
\]
for $\lambda>0$ (see Theorem \ref{thm:Nested-series} in Appendix
1), so we rewrite (\ref{eq:F-nested}) as
\[
F(m,n,\lambda;x)=\frac{1}{\Lambda_{mn}}\int\Delta^{2}(q_{1},\ldots,q_{m})\prod_{k=1}^{m}e^{-q_{k}}q_{k}^{n-m}dq_{k}\sum_{a_{k}=0}^{\infty}\frac{\Gamma(\lambda+a_{k})}{a_{k}!\Gamma(\lambda)}q_{k}^{a_{k}}x^{a_{k}}.
\]

\end{proof}
\bigskip{}

This expression still bears similarities to expressions used in past
work \cite{Page1993,Foong1994,Sanchez-Ruiz1995,Sen1996,Dyer2014a}.
To evaluate the integral, we will use a method similar to that used
by Foong \cite{Foong1994}.

\bigskip{}

\begin{thm}
~

\begin{equation}
F(m,n,\lambda;x)=\sum_{a_{0}=0}^{\infty}\cdots\sum_{a_{m-1}=0}^{\infty}\prod_{0\le i<j<m}\left(\frac{a_{i}-a_{j}}{j-i}+1\right)\prod_{s=0}^{m-1}\frac{\Gamma(\lambda+a_{s})}{\Gamma(\lambda)}\frac{\Gamma(n-s+a_{s})}{\Gamma(n-s)}\frac{x^{a_{s}}}{a_{s}!}\label{eq:F-eval}
\end{equation}
for positive integers $m$, $n$, $\lambda$ and $x$ satisfying $m\le n$.\end{thm}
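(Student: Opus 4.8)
The plan is to start from the integral representation (\ref{eq:F-integral}) and reduce it to the nested sum (\ref{eq:F-eval}) by two manipulations: an antisymmetrisation that removes one of the two Vandermonde factors, and the evaluation of a resulting determinant of Gamma functions. Everything is read as an identity of formal power series in $x$: the coefficient of a fixed power $x^{r}$ receives contributions only from the finitely many tuples with $\sum_{s}a_{s}=r$, and for each the integrand is a polynomial times $\prod_{k}e^{-q_{k}}$, so the underlying integrals converge and all rearrangements are legitimate even though the full $x$-series diverges. Writing $c_{a}=\Gamma(\lambda+a)/(a!\,\Gamma(\lambda))$ and expanding the inner series of (\ref{eq:F-integral}) over each variable gives
\[
\Lambda_{mn}F=\sum_{a_{1},\dots,a_{m}\ge 0}\Big(\prod_{k=1}^{m}c_{a_{k}}x^{a_{k}}\Big)\int\Delta^{2}(q)\prod_{k=1}^{m}e^{-q_{k}}q_{k}^{\,n-m+a_{k}}\,dq_{k},
\]
so the task reduces to evaluating the inner integral.

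First I would handle $\Delta^{2}$ before expanding the inner series, while the per-variable weight $e^{-q}q^{\,n-m}g(q)$ with $g(q)=\sum_{a}c_{a}(xq)^{a}=(1-xq)^{-\lambda}$ is still symmetric in the $m$ variables. For any symmetric weight $W$ one has
\[
\int\Delta^{2}(q)\,W(q)\,dq=m!\int\Big(\prod_{k=1}^{m}q_{k}^{\,k-1}\Big)\Delta(q)\,W(q)\,dq,
\]
which follows by expanding one Vandermonde factor as $\sum_{\sigma}\operatorname{sgn}(\sigma)\prod_{k}q_{k}^{\sigma(k)-1}$ and relabelling the integration variables by $\sigma$, each of the $m!$ terms contributing equally because $\operatorname{sgn}(\sigma)^{2}=1$. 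Expanding the surviving $\Delta(q)=\sum_{\tau}\operatorname{sgn}(\tau)\prod_{k}q_{k}^{\tau(k)-1}$ decouples the integral into single-variable Gamma integrals $\int_{0}^{\infty}q^{\,n-m+k+\tau(k)-2}e^{-q}g(q)\,dq=\sum_{a_{k}}c_{a_{k}}x^{a_{k}}\Gamma(n-m+k-1+a_{k}+\tau(k))$, and pulling the $a$-sums to the front recognises the $\tau$-sum as a determinant:
\[
\Lambda_{mn}F=m!\sum_{a_{1},\dots,a_{m}}\Big(\prod_{k=1}^{m}c_{a_{k}}x^{a_{k}}\Big)\det\big[\Gamma(n-m+k-1+a_{k}+\ell)\big]_{k,\ell=1}^{m}.
\]

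Next I would evaluate this determinant with $\beta_{k}=n-m+k-1+a_{k}$: factoring $\Gamma(\beta_{k}+1)$ out of row $k$ leaves the entries $\prod_{t=1}^{\ell-1}(\beta_{k}+t)$, which are monic polynomials in $\beta_{k}$ of degrees $0,1,\dots,m-1$, so their determinant is the Vandermonde $\prod_{1\le i<j\le m}(\beta_{j}-\beta_{i})=\prod_{i<j}\big[(j-i)+(a_{j}-a_{i})\big]$. Writing each factor as $(j-i)\big(1+\tfrac{a_{j}-a_{i}}{j-i}\big)$ splits off the constant $\prod_{i<j}(j-i)=\prod_{j=1}^{m-1}j!$, and relabelling the row index by $s=m-k\in\{0,\dots,m-1\}$ turns $\Gamma(\beta_{k}+1)$ into $\Gamma(n-s+a_{s})$ and the pair factors into $\tfrac{a_{i}-a_{j}}{j-i}+1$ with $0\le i<j<m$, reproducing the summand of (\ref{eq:F-eval}) up to the prefactor $m!\prod_{j=1}^{m-1}j!/\Lambda_{mn}$. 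To finish I would fix this constant by specialising the formula just obtained to $a\equiv 0$, which recovers the Laguerre normalisation $\Lambda_{mn}=m!\big(\prod_{j=1}^{m-1}j!\big)\prod_{s=0}^{m-1}\Gamma(n-s)$ and collapses the prefactor to $1/\prod_{s=0}^{m-1}\Gamma(n-s)$, exactly the $\Gamma(n-s)$ denominators in (\ref{eq:F-eval}).

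The determinant evaluation is routine once the first reduction is available, so the two genuinely delicate points are the following. The main obstacle is justifying the antisymmetrisation identity and every sum--integral interchange purely at the level of formal power series, since $g(q)=(1-xq)^{-\lambda}$ and the whole integral diverge for fixed nonzero $x$; this is controlled by working coefficient-wise in $x^{r}$, where each sum is finite and each integral convergent. The second is bookkeeping: aligning the relabelling $k\mapsto m-k$, the orientation of the Vandermonde, and the split-off superfactorial $\prod_{j=1}^{m-1}j!$ so that the residual constant cancels exactly against $\Lambda_{mn}$ --- which is why I would verify the constant by the $a\equiv0$ specialisation rather than tracking it by hand.
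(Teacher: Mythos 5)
Your proof is correct, and its skeleton is the same as the paper's: start from the integral representation (\ref{eq:F-integral}), use symmetry to collapse $\Delta^{2}$ to $m!\prod_{k}q_{k}^{k-1}\Delta$, evaluate the resulting determinant of Gamma functions as a shifted Vandermonde $\prod_{i<j}(a_{j}-a_{i}+j-i)\prod_{k}\Gamma(n-m+k+a_{k})$, relabel $k\mapsto m-k$, and fix the overall constant from $F(m,n,\lambda;0)=1$. The differences are in how two of these steps are executed. For the antisymmetrisation, the paper follows Foong: it inserts a damping factor $\exp(-\sum_{k}\epsilon_{k}q_{k})$, replaces the $q_{i}$ in $\Delta^{2}$ by operators $D_{i}=-\partial/\partial\epsilon_{i}$, and invokes the cited identity $|\mathcal{D}\mathcal{D}^{T}|f|_{\epsilon_{i}=\epsilon}=m!\,|\mathcal{D}|D_{2}D_{3}^{2}\cdots D_{m}^{m-1}f|_{\epsilon_{i}=\epsilon}$ for symmetric $f$, followed by an $\epsilon\to0$ limit; you instead expand one Vandermonde over permutations and relabel the integration variables, with the signs cancelling against the antisymmetry of the surviving $\Delta$. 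Your route is more elementary and self-contained --- it needs no operator calculus, no external identity, and no limit interchange, which is a genuine advantage given that the full series diverges for fixed $x\neq0$; you also make the coefficient-wise (formal power series) justification explicit, a point the paper passes over silently. For the determinant, the paper performs iterated row subtractions to reach $\Delta(a_{1}+1,\ldots,a_{m}+m)\prod_{k}\Gamma(n-m+a_{k}+k)$, while you factor $\Gamma(\beta_{k}+1)$ out of each row and observe that the remaining entries are monic polynomials of degrees $0,\ldots,m-1$ in $\beta_{k}$, so the determinant is the Vandermonde in the $\beta_{k}$; the conclusions agree, with your argument being shorter and the paper's being more explicitly computational. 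The normalisation step (evaluating the $a\equiv0$ term against $F(m,n,\lambda;0)=1$ from (\ref{eq:F_0})) is identical in both.
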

\begin{proof}
From Theorem \ref{thm:F-integral} we have
\[
F(m,n,\lambda;x)=\frac{1}{\Lambda_{mn}}\int\Delta^{2}(q_{1},\ldots,q_{m})\prod_{k=1}^{m}\left(e^{-q_{k}}q_{k}^{n-m}dq_{k}\sum_{a_{k}=0}^{\infty}\frac{\Gamma(\lambda+a_{k})}{a_{k}!\Gamma(\lambda)}q_{k}^{a_{k}}x^{a_{k}}\right).
\]
As in \cite{Foong1994}, we multiply the integrand by a ``damping
factor'' $\exp(-\sum_{k=1}^{m}\epsilon_{k}q_{k})$, and then replace
the $q_{i}$ in the Vandermonde discriminant $\Delta^{2}(q_{1},\ldots,q_{m})$
by $D_{i}=-\partial/\partial\epsilon_{i}$:
\begin{eqnarray}
F(m,n,\lambda;x) & = & \lim_{\epsilon\rightarrow0}\frac{\Delta^{2}(D_{1},\ldots,D_{m})}{\Lambda_{mn}}\prod_{k=1}^{m}\int_{0}^{\infty}e^{-(1+\epsilon_{k})q_{k}}q_{k}^{n-m}dq_{k}\nonumber \\
 &  & \left.\qquad\times\sum_{a_{k}=0}^{\infty}\frac{\Gamma(\lambda+a_{k})}{a_{k}!\Gamma(\lambda)}q_{k}^{a_{k}}x^{a_{k}}\right|_{\epsilon_{i}=\epsilon}.\label{eq:F-D}
\end{eqnarray}
Foong then notes that 
\[
\Delta^{2}(D_{1},\ldots,D_{m})=|\mathcal{DD}^{T}|,
\]
where
\[
\mathcal{D}=\left[\begin{array}{cccc}
1 & 1 & \cdots & 1\\
D_{1} & D_{2} & \cdots & D_{m}\\
\vdots & \vdots & \ddots & \vdots\\
D_{1}^{m-1} & D_{2}^{m-1} & \cdots & D_{m}^{m-1}
\end{array}\right],
\]
and that
\[
|\mathcal{DD}^{T}|\, f(\{\epsilon_{i}\})|_{\epsilon_{i}=\epsilon}=m!|\mathcal{D}|D_{2}D_{3}^{2}\cdots D_{m}^{m-1}f(\{\epsilon_{i}\})|_{\epsilon_{i}=\epsilon}
\]
when $f(\{\epsilon_{i}\})$ is a symmetric function of $\epsilon_{i}$
\cite{Foong1994}. Given this, we rewrite \ref{eq:F-D} as
\begin{eqnarray}
F(m,n,\lambda;x) & = & \lim_{\epsilon\rightarrow0}\frac{m!}{\Lambda_{mn}}|\mathcal{D}|\prod_{k=1}^{m}\int_{0}^{\infty}D_{k}^{k-1}e^{-(1+\epsilon_{k})q_{k}}q_{k}^{n-m}dq_{k}\nonumber \\
 &  & \left.\qquad\times\sum_{a_{k}=0}^{\infty}\frac{\Gamma(\lambda+a_{k})}{a_{k}!\Gamma(\lambda)}q_{k}^{a_{k}}x^{a_{k}}\right|_{\epsilon_{i}=\epsilon}\nonumber \\
 & = & \lim_{\epsilon\rightarrow0}\frac{m!}{\Lambda_{mn}}|\mathcal{D}|\prod_{k=1}^{m}\int_{0}^{\infty}e^{-(1+\epsilon_{k})q_{k}}q_{k}^{n-m+k-1}dq_{k}\nonumber \\
 &  & \left.\qquad\times\sum_{a_{k}=0}^{\infty}\frac{\Gamma(\lambda+a_{k})}{a_{k}!\Gamma(\lambda)}q_{k}^{a_{k}}x^{a_{k}}\right|_{\epsilon_{i}=\epsilon}\nonumber \\
 & = & \lim_{\epsilon\rightarrow0}\left.\frac{m!}{\Lambda_{mn}}|\mathcal{D}|\prod_{k=1}^{m}\sum_{a_{k}=0}^{\infty}\frac{\Gamma(\lambda+a_{k})}{a_{k}!\Gamma(\lambda)}\frac{\Gamma(n-m+k+a_{k})}{(1+\epsilon_{k})^{n-m+k+a_{k}}}x^{a_{k}}\right|_{\epsilon_{i}=\epsilon}\nonumber \\
 & = & \frac{m!}{\Lambda_{mn}}\sum_{a_{1}=0}^{\infty}\frac{\Gamma(\lambda+a_{1})}{a_{1}!\Gamma(\lambda)}x^{a_{1}}\cdots\sum_{a_{m}=0}^{\infty}\frac{\Gamma(\lambda+a_{m})}{a_{m}!\Gamma(\lambda)}x^{a_{m}}\nonumber \\
 &  & \qquad\times\lim_{\epsilon\rightarrow0}\left.|\mathcal{D}|\prod_{k=1}^{m}\frac{\Gamma(n-m+k+a_{k})}{(1+\epsilon_{k})^{n-m+k+a_{k}}}x^{a_{k}}\right|_{\epsilon_{i}=\epsilon}.\label{eq:F-det}
\end{eqnarray}

Let
\[
Q=\lim_{\epsilon\rightarrow0}\left.|\mathcal{D}|\prod_{k=1}^{m}\frac{\Gamma(n-m+k+a_{k})}{(1+\epsilon_{k})^{n-m+k+a_{k}}}\right|_{\epsilon_{i}=\epsilon}.
\]
Expanding the determinant out explicitly in terms of the Levi-Civita
symbol, we get
\begin{eqnarray*}
Q & = & \lim_{\epsilon\rightarrow0}\left.\varepsilon_{i_{1}\ldots i_{m}}\prod_{k=1}^{m}D_{k}^{i_{k}-1}\frac{\Gamma(n-m+k+a_{k})}{(1+\epsilon_{k})^{n-m+k+a_{k}}}\right|_{\epsilon_{i}=\epsilon}\\
 & = & \lim_{\epsilon\rightarrow0}\left.\varepsilon_{i_{1}\ldots i_{m}}\prod_{k=1}^{m}\frac{\Gamma(n-m+k+i_{k}-1+a_{k})}{(1+\epsilon_{k})^{n-m+k+i_{k}-1+a_{k}}}\right|_{\epsilon_{i}=\epsilon}\\
 & = & \varepsilon_{i_{1}\ldots i_{m}}\Gamma(n-m+k+i_{k}-1+a_{k})\\
 & = & \left|\begin{array}{cccc}
\Gamma(n-m+1+a_{1}) & \Gamma(n-m+2+a_{2}) & \cdots & \Gamma(n+a_{m})\\
\Gamma(n-m+2+a_{1}) & \Gamma(n-m+3+a_{2}) & \cdots & \Gamma(n+1+a_{m})\\
\Gamma(n-m+3+a_{1}) & \Gamma(n-m+4+a_{2}) & \cdots & \Gamma(n+2+a_{m})\\
\vdots & \vdots & \ddots & \vdots\\
\Gamma(n+a_{1}) & \Gamma(n+1+a_{2}) & \cdots & \Gamma(n+m-1+a_{m})
\end{array}\right|.
\end{eqnarray*}
We then simplify this determinant by a process of subtracting multiples
of different rows of the matrix from each other as follows:
\begin{enumerate}
\item Subtract $(n-m)$ times the first row from the second, $(n-m+1)$
times the second from the third etc. to give
\[
Q=\left|\tiny{\begin{array}{cccc}
\Gamma(n-m+1+a_{1}) & \Gamma(n-m+2+a_{2}) & \cdots & \Gamma(n+a_{m})\\
(a_{1}+1)\Gamma(n-m+1+a_{1}) & (a_{2}+2)\Gamma(n-m+2+a_{2}) & \cdots & (a_{m}+m)\Gamma(n+a_{m})\\
(a_{1}+1)\Gamma(n-m+2+a_{1}) & (a_{2}+2)\Gamma(n-m+3+a_{2}) & \cdots & (a_{m}+m)\Gamma(n+1+a_{m})\\
\vdots & \vdots & \ddots & \vdots\\
(a_{1}+1)\Gamma(n-1+a_{1}) & (a_{2}+2)\Gamma(n+a_{2}) & \cdots & (a_{m}+m)\Gamma(n+m-2+a_{m})
\end{array}}\right|.
\]

\item Subtract $(n-m)$ times the second row from the third, $(n-m+1)$
times the third row from the fourth etc. to give
\[
Q=\left|\tiny{\begin{array}{cccc}
\Gamma(n-m+1+a_{1}) & \Gamma(n-m+2+a_{2}) & \cdots & \Gamma(n+a_{m})\\
(a_{1}+1)\Gamma(n-m+1+a_{1}) & (a_{2}+2)\Gamma(n-m+2+a_{2}) & \cdots & (a_{m}+m)\Gamma(n+a_{m})\\
(a_{1}+1)^{2}\Gamma(n-m+1+a_{1}) & (a_{2}+2)^{2}\Gamma(n-m+2+a_{2}) & \cdots & (a_{m}+m)^{2}\Gamma(n+a_{m})\\
\vdots & \vdots & \ddots & \vdots\\
(a_{1}+1)^{2}\Gamma(n-2+a_{1}) & (a_{2}+2)^{2}\Gamma(n-1+a_{2}) & \cdots & (a_{m}+m)^{2}\Gamma(n+m-3+a_{m})
\end{array}}\right|.
\]

\item Continue to repeat this process, starting a row further down each
time, until
\begin{eqnarray*}
Q & = & \left|\tiny{\begin{array}{cccc}
\Gamma(n-m+1+a_{1}) & \Gamma(n-m+2+a_{2}) & \cdots & \Gamma(n+a_{m})\\
(a_{1}+1)\Gamma(n-m+1+a_{1}) & (a_{2}+2)\Gamma(n-m+2+a_{2}) & \cdots & (a_{m}+m)\Gamma(n+a_{m})\\
(a_{1}+1)^{2}\Gamma(n-m+1+a_{1}) & (a_{2}+2)^{2}\Gamma(n-m+2+a_{2}) & \cdots & (a_{m}+m)^{2}\Gamma(n+a_{m})\\
\vdots & \vdots & \ddots & \vdots\\
(a_{1}+1)^{m-1}\Gamma(n-m+1+a_{1}) & (a_{2}+2)^{m-1}\Gamma(n-m+2+a_{2}) & \cdots & (a_{m}+m)^{m-1}\Gamma(n+a_{m})
\end{array}}\right|\\
 & = & \Delta(a_{1}+1,\ldots,a_{m}+m)\prod_{k=1}^{m}\Gamma(n-m+a_{k}+k)\\
 & = & \prod_{0<i<j\le m}(a_{j}-a_{i}+j-i)\prod_{k=1}^{m}\Gamma(n-m+a_{k}+k).
\end{eqnarray*}

\end{enumerate}
We then substitute this expression into (\ref{eq:F-det}):
\begin{eqnarray*}
F(m,n,\lambda;x) & = & \frac{m!}{\Lambda_{mn}}\sum_{a_{1}=0}^{\infty}\frac{\Gamma(\lambda+a_{1})}{a_{1}!\Gamma(\lambda)}x^{a_{1}}\cdots\sum_{a_{m}=0}^{\infty}\frac{\Gamma(\lambda+a_{m})}{a_{m}!\Gamma(\lambda)}x^{a_{m}}\\
 &  & \qquad\times\lim_{\epsilon\rightarrow0}\left.|\mathcal{D}|\prod_{k=1}^{m}\frac{\Gamma(n-m+k+a_{k})}{(1+\epsilon_{k})^{n-m+k+a_{k}}}\right|_{\epsilon_{i}=\epsilon}\\
 & = & \frac{m!}{\Lambda_{mn}}\sum_{a_{1}=0}^{\infty}\frac{\Gamma(\lambda+a_{1})}{a_{1}!\Gamma(\lambda)}x^{a_{1}}\cdots\sum_{a_{m}=0}^{\infty}\frac{\Gamma(\lambda+a_{m})}{a_{m}!\Gamma(\lambda)}x^{a_{m}}\\
 &  & \qquad\times\prod_{0<i<j\le m}(a_{j}-a_{i}+j-i)\prod_{k=1}^{m}\Gamma(n-m+a_{k}+k),
\end{eqnarray*}
and simplify this by making the substitutions $i\rightarrow m-i$,
$j\rightarrow m-j$, $k\rightarrow m-k$ and $a_{s}\rightarrow a_{m-s}$
such that
\begin{eqnarray*}
F(m,n,\lambda;x) & = & \frac{m!}{\Lambda_{mn}}\sum_{a_{0}=0}^{\infty}\frac{\Gamma(\lambda+a_{0})}{a_{0}!\Gamma(\lambda)}x^{a_{0}}\cdots\sum_{a_{m-1}=0}^{\infty}\frac{\Gamma(\lambda+a_{m-1})}{a_{m-1}!\Gamma(\lambda)}x^{a_{m-1}}\\
 &  & \qquad\times\prod_{0\le i<j<m}(a_{i}-a_{j}+j-i)\prod_{s=0}^{m-1}\Gamma(n-s+a_{s}).
\end{eqnarray*}
We know from (\ref{eq:F_0}) that $F(m,n,\lambda;0)=1$, which means
we can now fix the value of the normalisation constant $\Lambda_{mn}$,
as
\[
F(m,n,\lambda;0)=\frac{m!}{\Lambda_{mn}}\prod_{0\le i<j<m}(j-i)\prod_{s=0}^{m-1}\Gamma(n-s)=1.
\]
Therefore,
\[
F(m,n,\lambda;x)=\sum_{a_{0}=0}^{\infty}\cdots\sum_{a_{m-1}=0}^{\infty}\prod_{0\le i<j<m}\left(\frac{a_{i}-a_{j}}{j-i}+1\right)\prod_{s=0}^{m-1}\frac{\Gamma(\lambda+a_{s})}{\Gamma(\lambda)}\frac{\Gamma(n-s+a_{s})}{\Gamma(n-s)}\frac{x^{a_{s}}}{a_{s}!}.
\]

\end{proof}
\medskip{}

This expression can now be used to evaluate any given $F_{r}$, by
summing only over the cases $\{a_{0},\ldots,a_{m-1}\}$ for which
$a_{0}+\ldots+a_{m-1}=r$. Unlike the closed-form expressions we derived
previously in \cite{Dyer2014a}, however, this expression cannot be
used directly to find polynomial expansions for $F_{r}(m,n,\lambda)$,
due to the dependence on $m$ of the number of summations and the
ranges of the products.

However, we know that each $H_{r}$ must be a symmetric polynomial
(Theorem \ref{thm:symmetric}) of order at $r$ in each of its parameters
(a hypermap with $r$ darts can have a most $r$ edges), and that
$H_{r}(m,n,\lambda)=0$ if any of its parameters are zero (all hypermaps
must have at least one each of vertices, edges and faces). Given that
$F_{0}(m,n,\lambda)=1$, it follows from (\ref{eq:generate-recurse})
that $F_{r}(m,n,\lambda)$ for any $r>0$ is also symmetric, order
$r$ in each parameter, and zero when $m$, $n$ or $\lambda$ are
zero. Therefore, we can compute the polynomial coefficients of $H_{r}$
(and therefore enumerate rooted hypermaps) by evaluating $F_{r}(m,n,\lambda)$
-- and by extension $H_{r}(m,n,\lambda)$ -- using (\ref{eq:F-eval})
and (\ref{eq:generate-recurse}) at all $1\le m\le n\le\lambda\le r$
and using polynomial interpolation.

We used this method to compute the coefficients of $H_{r}$ for all
$1\le r\le13$. Some of the output is given in Appendix 2, and the
results agree exactly with past computations, in particular Walsh's
enumeration of all rooted hypermaps up to $r=12$ \cite{Walsh2012}.
Running on a 2012 Dell XPS 12 these calculations took 107 minutes,
in comparison to the few days taken by Walsh's algorithm.

\subsection{Special cases}

While no simple closed-form polynomial expressions are available for
$H_{r}(m,n,\lambda)$, there are a few special cases in which we can
get more useful results.

Consider the function 
\begin{equation}
H_{r}(1,m,n)=\sum_{v,e,f}H_{vefr}m^{e}n^{f}.\label{eq:P_2}
\end{equation}
This is the generating function for enumerating rooted hypermaps with
$r$ darts by number of edges and faces (with all possible numbers
of vertices summed over). By the symmetry of $H_{r}$, (\ref{eq:P_2})
could also be used to enumerate by number of vertices and edges, summing
over all numbers of faces etc.

\bigskip{}

\begin{thm}
For all $r>0$,
\begin{equation}
H_{r}(1,m,n)=\frac{1}{(r-1)!}\frac{\Gamma(m+r)}{\Gamma(m)}\frac{\Gamma(n+r)}{\Gamma(n)}-\sum_{k=1}^{r-1}\frac{1}{k!}\frac{\Gamma(m+k)}{\Gamma(m)}\frac{\Gamma(n+k)}{\Gamma(n)}H_{r-k}(1,m,n).\label{eq:P_2_eval}
\end{equation}
\end{thm}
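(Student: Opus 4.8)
The plan is to specialise the closed-form evaluation (\ref{eq:F-eval}) of $F$ so that its first argument equals $1$, and then to feed the resulting coefficients into the convolution recursion (\ref{eq:generate-recurse}). The key observation is that, in (\ref{eq:F-eval}), it is the first argument that simultaneously fixes the number of nested summations and the range of the product $\prod_{0\le i<j<m}$. Setting this argument to $1$ therefore collapses the $m$-fold sum to a single sum over $a_{0}$, while the product over pairs $0\le i<j<1$ is empty and so equals $1$. Since the hypothesis $m\le n$ attached to (\ref{eq:F-eval}) becomes $1\le m$, which holds for every positive integer $m$, the specialisation is legitimate.

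First I would record the collapsed series. Placing $1$ in the first slot, $m$ in the second and $n$ in the third, (\ref{eq:F-eval}) reduces to
\[
F(1,m,n;x)=\sum_{a_{0}=0}^{\infty}\frac{\Gamma(m+a_{0})}{\Gamma(m)}\frac{\Gamma(n+a_{0})}{\Gamma(n)}\frac{x^{a_{0}}}{a_{0}!},
\]
so that the coefficient of $x^{r}$ is
\[
F_{r}(1,m,n)=\frac{1}{r!}\frac{\Gamma(m+r)}{\Gamma(m)}\frac{\Gamma(n+r)}{\Gamma(n)}.
\]
This is manifestly symmetric in $m$ and $n$, as it must be by Theorem \ref{thm:symmetric}, which serves as a useful consistency check. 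I would then evaluate (\ref{eq:generate-recurse}) at the point $(1,m,n)$, giving $\sum_{k=0}^{r}H_{r-k}(1,m,n)F_{k}(1,m,n)=rF_{r}(1,m,n)$. Using $F_{0}=1$ to peel off the $k=0$ term and $H_{0}=0$ to discard the $k=r$ term yields
\[
H_{r}(1,m,n)=rF_{r}(1,m,n)-\sum_{k=1}^{r-1}F_{k}(1,m,n)\,H_{r-k}(1,m,n),
\]
and substituting the closed form for $F_{k}$ together with $r/r!=1/(r-1)!$ produces exactly the claimed identity.

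Since every step is an explicit specialisation followed by elementary rearrangement, I do not expect a serious obstacle; the content of the argument lies entirely in the collapse of the multiple sum. The only matters requiring care are bookkeeping: recognising that it is precisely the first argument (the one set to $1$) that controls the number of summations in (\ref{eq:F-eval}), even though $F_{r}$ itself is symmetric, and handling the two endpoint terms of the convolution correctly through $F_{0}=1$ and $H_{0}=0$.
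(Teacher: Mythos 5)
Your proposal is correct and follows essentially the same route as the paper: specialise (\ref{eq:F-eval}) at first argument $1$ to collapse the multiple sum and obtain $F_{r}(1,m,n)=\frac{1}{r!}\frac{\Gamma(m+r)}{\Gamma(m)}\frac{\Gamma(n+r)}{\Gamma(n)}$, then substitute into (\ref{eq:generate-recurse}) and rearrange using $F_{0}=1$ and $H_{0}=0$. Your treatment of the endpoint terms of the convolution is in fact slightly more explicit than the paper's.
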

\begin{proof}
From (\ref{eq:F-eval}) we have that
\[
F(1,m,n;x)=\sum_{a=0}^{\infty}\frac{\Gamma(m+a)}{\Gamma(m)}\frac{\Gamma(n+a)}{\Gamma(n)}\frac{x^{a}}{a!},
\]
so
\[
F_{r}(1,m,n)=\frac{1}{r!}\frac{\Gamma(m+r)}{\Gamma(m)}\frac{\Gamma(n+r)}{\Gamma(n)}.
\]
We substitute this into (\ref{eq:generate-recurse}) and rearrange
to get
\begin{eqnarray*}
H_{r}(1,m,n) & = & rF_{r}(1,m,n)-\sum_{k=1}^{r-1}F_{k}(1,m,n)H_{r-k}(1,m,n)\\
 & = & \frac{1}{(r-1)!}\frac{\Gamma(m+r)}{\Gamma(m)}\frac{\Gamma(n+r)}{\Gamma(n)}\\
 &  & -\sum_{k=1}^{r-1}\frac{1}{k!}\frac{\Gamma(m+k)}{\Gamma(m)}\frac{\Gamma(n+k)}{\Gamma(n)}H_{r-k}(1,m,n).
\end{eqnarray*}

\bigskip{}

In contrast to expressions such as (\ref{eq:F-integral}) and (\ref{eq:F-eval}),
this expression obviously gives rise to symmetric polynomial functions.

Given (\ref{eq:P_2_eval}), the following two results follow trivially:

\bigskip{}
\end{proof}
\begin{cor}
For all $r>0$,
\[
H_{r}(1,1,m)=r\frac{\Gamma(m+r)}{\Gamma(m)}-\sum_{k=1}^{r-1}\frac{\Gamma(m+k)}{\Gamma(m)}H_{r-k}(1,1,m).
\]

\end{cor}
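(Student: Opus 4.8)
The plan is to obtain this corollary as an immediate specialisation of the preceding theorem, exactly as the phrase ``follow trivially'' suggests. The theorem provides a closed recursion for $H_r(1,m,n)$ in (\ref{eq:P_2_eval}); to reach $H_r(1,1,m)$ I would set the theorem's second argument equal to $1$ and relabel its third argument as $m$. Concretely, in (\ref{eq:P_2_eval}) I substitute the theorem's $m\mapsto 1$ and its $n\mapsto m$. The only analytic input needed is the pair of elementary identities $\Gamma(1)=1$ and $\Gamma(1+j)=j!$ for nonnegative integers $j$, so this is a bookkeeping computation rather than anything requiring new ideas.

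Carrying out the substitution, the leading term of (\ref{eq:P_2_eval}) becomes
\[
\frac{1}{(r-1)!}\frac{\Gamma(1+r)}{\Gamma(1)}\frac{\Gamma(m+r)}{\Gamma(m)}=\frac{r!}{(r-1)!}\frac{\Gamma(m+r)}{\Gamma(m)}=r\,\frac{\Gamma(m+r)}{\Gamma(m)},
\]
using $\Gamma(1+r)/\Gamma(1)=r!$ and $r!/(r-1)!=r$. In the same way, each summand acquires the prefactor $\frac{1}{k!}\frac{\Gamma(1+k)}{\Gamma(1)}=\frac{k!}{k!}=1$, so the general term of the sum collapses to $\frac{\Gamma(m+k)}{\Gamma(m)}H_{r-k}(1,1,m)$. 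Assembling these two pieces reproduces precisely the stated formula, completing the derivation.

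The step I expect to be the only thing worth remarking on, rather than a genuine obstacle, is confirming that $H_r(1,1,m)$ is a legitimate specialisation of the symmetric polynomial $H_r$. This is guaranteed by Theorem \ref{thm:symmetric}: since $H_r(m,n,\lambda)$ is completely symmetric in its three arguments, setting any two of them equal to $1$ yields a well-defined one-variable polynomial, and the recursion (\ref{eq:generate-recurse}) that underlies (\ref{eq:P_2_eval}) specialises consistently under this evaluation. Beyond that, the proof is a direct substitution, so no further machinery is required.
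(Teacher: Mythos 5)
Your proposal is correct and is exactly the argument the paper intends: the corollary is stated to ``follow trivially'' from (\ref{eq:P_2_eval}), namely by substituting $m\mapsto 1$, $n\mapsto m$ there and simplifying with $\Gamma(1+j)=j!$, which is precisely your computation. Nothing further is needed.
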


\begin{cor}
For all $r>0$,
\[
H_{r}(1,1,1)=r\cdot r!-\sum_{k=1}^{r-1}k!H_{r-k}(1,1,1).
\]

\end{cor}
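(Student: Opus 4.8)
The plan is to obtain this corollary as an immediate specialisation of the preceding theorem, namely equation (\ref{eq:P_2_eval}), by setting both remaining free parameters equal to $1$; equivalently one may take the second argument to be $1$ in the first corollary above. No new structural input is required, because (\ref{eq:P_2_eval}) already expresses $H_r(1,m,n)$ as an explicit recursion in $r$ whose coefficients are ratios of Gamma functions. The only task is therefore to evaluate those coefficients at the integer arguments $m=n=1$, which by the symmetry of $H_r$ (Theorem \ref{thm:symmetric}) is a legitimate evaluation of the quantity $H_r(1,1,1)$ appearing in the claim.

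First I would record the elementary identity $\Gamma(1+t)=t!$ for every nonnegative integer $t$, so in particular $\Gamma(1)=1$. Substituting $m=n=1$ into (\ref{eq:P_2_eval}), the leading term collapses to
\[
\frac{1}{(r-1)!}\,\frac{\Gamma(1+r)}{\Gamma(1)}\,\frac{\Gamma(1+r)}{\Gamma(1)}=\frac{(r!)^{2}}{(r-1)!}=r\cdot r!,
\]
while each summand of the correction term collapses to
\[
\frac{1}{k!}\,\frac{\Gamma(1+k)}{\Gamma(1)}\,\frac{\Gamma(1+k)}{\Gamma(1)}\,H_{r-k}(1,1,1)=\frac{(k!)^{2}}{k!}\,H_{r-k}(1,1,1)=k!\,H_{r-k}(1,1,1).
\]
Assembling the two pieces yields exactly
\[
H_r(1,1,1)=r\cdot r!-\sum_{k=1}^{r-1}k!\,H_{r-k}(1,1,1),
\]
which is the asserted recursion.

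I do not expect any genuine obstacle here: the mathematical content resides entirely in the theorem that produced (\ref{eq:P_2_eval}), and the corollary reduces to factorial bookkeeping. The one point deserving a moment's care is to confirm that the chosen parameter values lie in the admissible range of (\ref{eq:P_2_eval})---the theorem is stated for positive integers, and $1$ qualifies---so that the Gamma ratios are finite and the specialisation is valid; once this is checked, the derivation is a direct substitution.
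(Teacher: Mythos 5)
Your proposal is correct and matches the paper's intended argument: the paper states that this corollary follows trivially from (\ref{eq:P_2_eval}), and your substitution $m=n=1$ with $\Gamma(1+t)=t!$, giving $\frac{(r!)^2}{(r-1)!}=r\cdot r!$ for the leading term and $\frac{(k!)^2}{k!}=k!$ for each summand, is exactly that routine specialisation.
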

\bigskip{}

The second in particular allows us to count how many rooted hypermaps
there are in total with $r$ darts. The first few values are 1, 3,
13, 71, 461...

\section{Conclusions}

We have demonstrated a method for computing generating functions to
enumerate rooted hypermaps by number of vertices, edges and faces
for any given number of darts. This is an extension of previous work
where we derived closed form generating functions for counting enumerating
rooted hypermaps with one face \cite{Dyer2014a}, but in contrast
to that case the method shown here defines the generating function
$H_{r}$ for $r$ darts recursively in terms of $H_{1},\ldots,H_{r-1}$,
and it only allows $H_{r}$ to be evaluated numerically, not expanded
directly as a polynomial. We were still able to obtain a polynomial
expansion, however, by using polynomial interpolation.

This work is a further demonstration of the use of matrix integration
as a tool for finding generating functions for enumerating sets of
combinatoric objects. It specifically demonstrates the link, first
discussed in \cite{Dyer2014a}, between rooted hypermaps and the ensemble
of reduced density operators on random states of a bipartite quantum
system.

We also discussed a number of related results. First we showed the
symmetry of the generating functions $H_{r}$, arising from the symmetry
of 3-constellations, and used this to speed up computation of $H_{r}$
by reducing the range over which $H_{r}$ needed to be evaluated to
fix the polynomial expansion. Then we looked at cases where one or
more parameters in $H_{r}$ were set to unity, giving generating functions
for enumerating larger sets of rooted hypermaps (such as all those
with $r$ darts and $f$ faces, summing over all possible numbers
of edges and vertices). In particular, this allowed us to easily count
all rooted hypermaps with $r$ darts and any number of edges, vertices
and faces.

\section{Acknowledgements}

The work in this paper was supported by an EPSRC research studentship
at the University of York.

\section*{Appendix 1}
\begin{thm}
\label{thm:Nested-series}For positive integer $m$ and $\lambda>0$,
the formal power series
\[
\sum_{N=0}^{\infty}\frac{\lambda^{N}}{N!}\left(\sum_{j=1}^{m}\sum_{r=1}^{\infty}\frac{q_{j}^{r}x^{r}}{r}\right)^{N}=\prod_{j=1}^{m}\sum_{a_{j}=0}^{\infty}\frac{\Gamma(\lambda+a_{j})}{a_{j}!\Gamma(\lambda)}q_{j}^{a_{j}}x^{a_{j}},
\]
where $q_{j}$ are components of an $m$-dimensional real vector.\end{thm}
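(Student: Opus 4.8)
The plan is to recognise the left-hand side as a formal exponential and then collapse the inner double sum using the formal power series for the logarithm. Since the innermost sum $\sum_{r=1}^{\infty}\frac{q_{j}^{r}x^{r}}{r}$ has no constant term in $x$, the operator $\sum_{N\ge0}\frac{\lambda^{N}}{N!}(\,\cdot\,)^{N}$ is a well-defined formal exponential: the coefficient of $x^{k}$ receives contributions only from $N\le k$, so each order in $x$ is a finite computation. I would therefore write the left-hand side as $\exp\bigl(\lambda\sum_{j=1}^{m}\sum_{r=1}^{\infty}\frac{q_{j}^{r}x^{r}}{r}\bigr)$, understood as an identity in the ring $\mathbb{R}[[x]]$ of formal power series in $x$, with $q_{1},\dots,q_{m}$ and $\lambda$ regarded as fixed real parameters.

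First I would apply the formal identity $\sum_{r\ge1}\frac{z^{r}}{r}=-\log(1-z)$, valid for any formal power series $z$ with vanishing constant term, with $z=q_{j}x$. Summing over $j$ and using that the formal logarithm sends products to sums gives $\sum_{j=1}^{m}\sum_{r=1}^{\infty}\frac{q_{j}^{r}x^{r}}{r}=-\log\prod_{j=1}^{m}(1-q_{j}x)$. The left-hand side then becomes $\exp\bigl(-\lambda\log\prod_{j=1}^{m}(1-q_{j}x)\bigr)$, which, because $\exp$ and $\log$ are mutually inverse on formal series with the appropriate constant terms, factorises as $\prod_{j=1}^{m}(1-q_{j}x)^{-\lambda}$, where $(1-q_{j}x)^{-\lambda}:=\exp\bigl(-\lambda\log(1-q_{j}x)\bigr)$ denotes the formal power.

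Next I would expand each factor by Newton's generalised binomial theorem, which holds formally: $(1-q_{j}x)^{-\lambda}=\sum_{a_{j}\ge0}\binom{-\lambda}{a_{j}}(-q_{j}x)^{a_{j}}$. A short computation shows $\binom{-\lambda}{a_{j}}=\frac{(-\lambda)(-\lambda-1)\cdots(-\lambda-a_{j}+1)}{a_{j}!}=(-1)^{a_{j}}\frac{\Gamma(\lambda+a_{j})}{a_{j}!\,\Gamma(\lambda)}$, so the sign cancels against $(-q_{j}x)^{a_{j}}$ and each factor equals $\sum_{a_{j}\ge0}\frac{\Gamma(\lambda+a_{j})}{a_{j}!\,\Gamma(\lambda)}q_{j}^{a_{j}}x^{a_{j}}$. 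Taking the product over $j=1,\dots,m$ yields exactly the right-hand side.

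The only genuine care needed, and the main (mild) obstacle, is the formal-power-series bookkeeping: confirming that the exponential, the logarithm, and the binomial expansion are all legitimate operations in $\mathbb{R}[[x]]$ and that no analytic convergence is being invoked (indeed the series diverges for generic fixed $x$, as noted in the proof of Theorem \ref{thm:F-integral}). This rests on the single observation that every series being exponentiated or logged has vanishing constant term in $x$, so the substitutions and the relation $\exp\log=\mathrm{id}$ reduce to finite identities at each order of $x$. With that observation in place, all three steps are standard formal manipulations and the argument is complete.
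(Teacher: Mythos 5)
Your proof is correct, but it takes a genuinely different route from the paper's. You work entirely inside the ring $\mathbb{R}[[x]]$: because the inner sum has no constant term in $x$, the formal exponential, the formal identity $\sum_{r\ge1}z^{r}/r=-\log(1-z)$, the relation $\exp\log=\mathrm{id}$, and the formal binomial expansion of $(1-q_{j}x)^{-\lambda}$ are all legitimate operations, and every coefficient identity is a finite computation. The paper instead argues analytically: for fixed real $q_{j}$ and $\lambda$ the left-hand series has a positive radius of convergence (it converges whenever $|q_{j}x|<1$ for all $j$), it sums there to the function $\prod_{j}(1-q_{j}x)^{-\lambda}$, that function's Taylor expansion is the right-hand side, and uniqueness of Taylor expansions forces the two power series to agree coefficientwise. (Your parenthetical that the series ``diverges for generic fixed $x$'' conflates this situation with the divergence of the $q$-integral in the proof of Theorem \ref{thm:F-integral}; for fixed $q$ the radius of convergence in $x$ is positive, which is exactly what the paper exploits.) Both arguments pivot on the same identity $\exp\bigl(-\lambda\sum_{j}\log(1-q_{j}x)\bigr)=\prod_{j}(1-q_{j}x)^{-\lambda}$ followed by the Gamma-function form of the binomial series. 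What your version buys is complete independence from convergence considerations --- it even remains valid with $\lambda$ treated as an indeterminate --- which sits naturally with the paper's insistence elsewhere that these objects are formal, generally divergent, series. What the paper's version buys is that the classical binomial series can be cited as a standard fact about analytic functions, whereas you should strictly justify the \emph{formal} binomial theorem; the cleanest way is to note that both sides of
\[
\exp\bigl(-\lambda\log(1-z)\bigr)=\sum_{a\ge0}\frac{\Gamma(\lambda+a)}{a!\,\Gamma(\lambda)}z^{a}
\]
satisfy the formal differential equation $(1-z)F'(z)=\lambda F(z)$ with constant term $1$, whose coefficient recursion $c_{a+1}=\frac{\lambda+a}{a+1}c_{a}$ has a unique solution.
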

\begin{proof}
Let 
\begin{equation}
L_{\lambda,q}(x)=\sum_{N=0}^{\infty}\frac{\lambda^{N}}{N!}\left(\sum_{j=1}^{m}\sum_{r=1}^{\infty}\frac{q_{j}^{r}x^{r}}{r}\right)^{N}.\label{eq:series-1}
\end{equation}
For any given positive integer $a$, we see by inspection that this
series contains only a finite number of terms of order $x^{a}$, as
such terms can only come from cases where $1\le N\le a$. In addition,
there is only one constant term: the $N=0$ case which equals unity.
Therefore, $L_{\lambda,q}(x)$ can be written in the form
\begin{equation}
L_{\lambda,q}(x)=\sum_{a=0}^{\infty}f_{a}(\lambda,q)x^{a}\label{eq:Taylor-general}
\end{equation}
where each $f_{a}(\lambda,q)$ is a polynomial in $\lambda$ and $q$.

$L_{\lambda,q}(x)$ converges when $(|q_{j}x|)<1$ for all $j$ to
\begin{eqnarray*}
L_{\lambda}(x) & = & \sum_{N=0}^{\infty}\frac{\lambda^{N}}{N!}\left(-\sum_{j=1}^{m}\ln(1-q_{j}x)\right)\\
 & = & \exp[-\lambda\sum_{j=1}^{m}\ln(1-q_{j}x)]\\
 & = & \prod_{j=1}^{m}\frac{1}{(1-q_{j}x)^{\lambda}}.
\end{eqnarray*}
This has a series expansion in $x$, also valid when $|q_{j}x|<1$
for all $j$, of
\begin{equation}
\prod_{j=1}^{m}\sum_{a_{j}=0}^{\infty}\frac{\Gamma(\lambda+a_{j})}{a_{j}!\Gamma(\lambda)}q_{j}^{a_{j}}x^{a_{j}}.\label{eq:series-2}
\end{equation}
This can also be rearranged into the form (\ref{eq:Taylor-general}).
(\ref{eq:series-1}) and (\ref{eq:series-2}) are therefore both Taylor
series with the same radius of convergence, and they are equal to
each other everywhere within it, so it follows from the uniqueness
of Taylor series expansions of smuooth functions that they are equivalent,
i.e.

\[
\sum_{N=0}^{\infty}\frac{\lambda^{N}}{N!}\left(\sum_{j=1}^{m}\sum_{r=1}^{\infty}\frac{q_{j}^{r}x^{r}}{r}\right)^{N}=\prod_{j=1}^{m}\sum_{a_{j}=0}^{\infty}\frac{\Gamma(\lambda+a_{j})}{a_{j}!\Gamma(\lambda)}q_{j}^{a_{j}}x^{a_{j}}.
\]

\end{proof}
\pagebreak{}

\section*{Appendix 2}

Numbers of rooted hypermaps with $v$ vertices, $e$ edges, $f$ faces
and $r$ darts, calculated by computing the generating functions $H_{r}$.
Only the cases with $v\le e\le f$ are given, as the rest follow from
the symmetry of $H_{r}$. The cases $1\le r\le7$ are included for
comparison with Walsh's previous computation \cite{Walsh2012}, with
all cases up to $r=12$ agreeing with his computation. The new case
$r=13$ is also shown.

\begin{minipage}[t]{0.5\textwidth}
\tiny

$\boldsymbol{r=1:}$\\
\begin{tabular}{ccc|c}
$v$ & $e$ & $f$ & $N$\tabularnewline
\hline 
1 & 1 & 1 & 1\tabularnewline
\end{tabular}

\medskip{}
$\boldsymbol{r=2:}$\\
\begin{tabular}{ccc|c}
$v$ & $e$ & $f$ & $N$\tabularnewline
\hline 
1 & 1 & 2 & 1\tabularnewline
\end{tabular}

\medskip{}
$\boldsymbol{r=3:}$\\
\begin{tabular}{ccc|c}
$v$ & $e$ & $f$ & $N$\tabularnewline
\hline 
1 & 1 & 1 & 1\tabularnewline
1 & 2 & 2 & 3\tabularnewline
1 & 1 & 3 & 1\tabularnewline
\end{tabular}

\medskip{}
$\boldsymbol{r=4:}$\\
\begin{tabular}{ccc|c}
$v$ & $e$ & $f$ & $N$\tabularnewline
\hline 
1 & 1 & 2 & 5\tabularnewline
2 & 2 & 2 & 17\tabularnewline
1 & 2 & 3 & 6\tabularnewline
1 & 1 & 4 & 1\tabularnewline
\end{tabular}

\medskip{}

$\boldsymbol{r=5:}$\\
\begin{tabular}{ccc|c}
$v$ & $e$ & $f$ & $N$\tabularnewline
\hline 
1 & 1 & 1 & 8\tabularnewline
1 & 2 & 2 & 40\tabularnewline
1 & 1 & 3 & 15\tabularnewline
2 & 2 & 3 & 55\tabularnewline
1 & 3 & 3 & 20\tabularnewline
1 & 2 & 4 & 10\tabularnewline
1 & 1 & 5 & 1\tabularnewline
\end{tabular}

\medskip{}

$\boldsymbol{r=6:}$\\
\begin{tabular}{ccc|c}
$v$ & $e$ & $f$ & $N$\tabularnewline
\hline 
1 & 1 & 2 & 84\tabularnewline
2 & 2 & 2 & 456\tabularnewline
1 & 2 & 3 & 175\tabularnewline
2 & 3 & 3 & 262\tabularnewline
1 & 1 & 4 & 35\tabularnewline
2 & 2 & 4 & 135\tabularnewline
1 & 3 & 4 & 50\tabularnewline
1 & 2 & 5 & 15\tabularnewline
1 & 1 & 6 & 1\tabularnewline
\end{tabular}

\medskip{}

$\boldsymbol{r=7:}$\\
\begin{tabular}{ccc|c}
$v$ & $e$ & $f$ & $N$\tabularnewline
\hline 
1 & 1 & 1 & 180\tabularnewline
1 & 2 & 2 & 1183\tabularnewline
1 & 1 & 3 & 469\tabularnewline
2 & 2 & 3 & 2695\tabularnewline
1 & 3 & 3 & 1050\tabularnewline
3 & 3 & 3 & 1694\tabularnewline
1 & 2 & 4 & 560\tabularnewline
2 & 3 & 4 & 889\tabularnewline
1 & 4 & 4 & 175\tabularnewline
1 & 1 & 5 & 70\tabularnewline
2 & 2 & 5 & 280\tabularnewline
1 & 3 & 5 & 105\tabularnewline
1 & 2 & 6 & 21\tabularnewline
1 & 1 & 7 & 1\tabularnewline
\end{tabular}

\normalsize
\end{minipage}
\begin{minipage}[t]{0.5\textwidth}
\tiny

$\boldsymbol{r=13:}$\\
\begin{tabular}{ccc|c}
$v$ & $e$ & $f$ & $N$\tabularnewline
\hline 
1 & 1 & 1 & 68428800\tabularnewline
1 & 2 & 2 & 686597184\tabularnewline
1 & 1 & 3 & 292271616\tabularnewline
2 & 2 & 3 & 2820651496\tabularnewline
1 & 3 & 3 & 1194737544\tabularnewline
3 & 3 & 3 & 4623070842\tabularnewline
1 & 2 & 4 & 687238552\tabularnewline
2 & 3 & 4 & 2646424729\tabularnewline
1 & 4 & 4 & 636184120\tabularnewline
3 & 4 & 4 & 2239280420\tabularnewline
1 & 1 & 5 & 109425316\tabularnewline
2 & 2 & 5 & 988043771\tabularnewline
1 & 3 & 5 & 414918075\tabularnewline
3 & 3 & 5 & 1453414846\tabularnewline
2 & 4 & 5 & 824962502\tabularnewline
4 & 4 & 5 & 582408775\tabularnewline
1 & 5 & 5 & 125855730\tabularnewline
3 & 5 & 5 & 374805834\tabularnewline
5 & 5 & 5 & 64013222\tabularnewline
1 & 2 & 6 & 108452916\tabularnewline
2 & 3 & 6 & 374127663\tabularnewline
1 & 4 & 6 & 87933846\tabularnewline
3 & 4 & 6 & 260619268\tabularnewline
2 & 5 & 6 & 93880696\tabularnewline
4 & 5 & 6 & 44136820\tabularnewline
1 & 6 & 6 & 9513504\tabularnewline
3 & 6 & 6 & 19315114\tabularnewline
1 & 1 & 7 & 8691683\tabularnewline
2 & 2 & 7 & 70367479\tabularnewline
1 & 3 & 7 & 29135106\tabularnewline
3 & 3 & 7 & 85050784\tabularnewline
2 & 4 & 7 & 47604648\tabularnewline
4 & 4 & 7 & 22089600\tabularnewline
1 & 5 & 7 & 6936930\tabularnewline
3 & 5 & 7 & 14019928\tabularnewline
2 & 6 & 7 & 3356522\tabularnewline
1 & 7 & 7 & 226512\tabularnewline
1 & 2 & 8 & 4114110\tabularnewline
2 & 3 & 8 & 11674663\tabularnewline
1 & 4 & 8 & 2642640\tabularnewline
3 & 4 & 8 & 5264545\tabularnewline
2 & 5 & 8 & 1827683\tabularnewline
1 & 6 & 8 & 169884\tabularnewline
1 & 1 & 9 & 183183\tabularnewline
2 & 2 & 9 & 1225653\tabularnewline
1 & 3 & 9 & 495495\tabularnewline
3 & 3 & 9 & 960960\tabularnewline
2 & 4 & 9 & 525525\tabularnewline
1 & 5 & 9 & 70785\tabularnewline
1 & 2 & 10 & 40040\tabularnewline
2 & 3 & 10 & 74217\tabularnewline
1 & 4 & 10 & 15730\tabularnewline
1 & 1 & 11 & 1001\tabularnewline
2 & 2 & 11 & 4433\tabularnewline
1 & 3 & 11 & 1716\tabularnewline
1 & 2 & 12 & 78\tabularnewline
1 & 1 & 13 & 1\tabularnewline
\end{tabular}

\normalsize
\end{minipage}

\bibliographystyle{plain}
\bibliography{Hypermaps2}

\end{document}